\newcommand{\RR}{\mathbb{R}}
\newcommand{\ZZ}{\mathbb{Z}}
\newcommand{\NN}{\mathbb{N}}
\newcommand{\la}{\lambda}
\newtheorem{thrm}{Theorem}[section]
\newtheorem{lemma}{Lemma}[section]
\newtheorem{cor}{Corollary}[section]
\newtheorem{prop}{Proposition}[section]
\theoremstyle{definition}
\newtheorem{example}{Example}[section]
\newtheorem{rem}{Remark}[section]
\newtheorem{defn}{Definition}[section]
\begin{document}

\title[LP property]{The linear span of projections in AH algebras and for inclusions of $C^*$-algebras}

\author[D. T. Hoa]{Dinh Trung Hoa}
\address{Center of Research and Development, Duy Tan University, K7/25 Quang Trung, Da Nang, Viet Nam}
\email{dinhtrunghoa@duytan.edu.vn}

\author{Toan Minh Ho}
\address{Institute of Mathematics, VAST, 18 Hoang Quoc Viet, Ha Noi, Viet Nam}
\email{hmtoan@math.ac.vn}

\author{Hiroyuki Osaka$^*$}
\date{18, Oct., 2012}
\thanks{$^*$Research of the third author partially supported by the JSPS grant for Scientific Research No.23540256}
\address{ Department of Mathematical Sciences\\
  Ritsumeikan University\\ Kusatsu, Shiga 525-8577,  Japan}

\email[]{osaka@se.ritsumei.ac.jp}


\subjclass[2010]{Primary 46L55; Secondary 46L35.}



\keywords{diagonal AH-algebras, small eigenvalue variations,
linear span of projections, matrix algebras}

\begin{abstract}
A $C^*$-algebra is said to have the LP property if the linear span of projections is dense in a given algebra.
In the first part of this paper, we show that an AH algebra $A = \underrightarrow{\lim}(A_i,\phi_i)$ has the LP property if and
only if every real-valued continuous function on the spectrum of $A_i$ 
(as an element of $A_i$ via the non-unital embedding) belongs to the closure of the linear span of projections in $A$. 
As a consequence, a diagonal AH-algebra has the LP property if it has small eigenvalue variation.
The second contribution of this paper is that for an inclusion of unital $C^*$-algebras $P \subset A$ with 
a finite Watatani Index, if a faithful conditional expectation $E\colon A \rightarrow P$ has the Rokhlin property 
in the sense of Osaka and Teruya, then $P$ has the LP property under the condition $A$ has the LP property.
As an application, let $A$ be a simple unital $C^*$-algebra with the LP property, $G$ a finite group and $\alpha$ an action 
of $G$ onto $\mathrm{Aut}(A)$. If $\alpha$ has the Rokhlin property in the sense of Izumi, then 
the fixed point algebra $A^G$ and the crossed product algebra $A \rtimes_\alpha G$ have the LP property. 
We also point out that there is a symmetry on CAR algebra, which is constructed by Elliott, 
such that its fixed point algebra does not have the LP property.
\end{abstract}

\maketitle
\section{Introduction}
A $C^*$-algebra is said to have {\it the LP property} if the
linear span of projections (i.e., the set of all linear
combinations of projections in the algebra) is dense in this
algebra. A picture of the problem which asks to characterize 
the simple $C^*$-algebras to have the LP property was considered in \cite{GP}. 
The LP property of a $C^*$-algebra is weaker than real rank zero since the later means
every self-adjoint element can be arbitrarily closely approximated
by linear combinations of orthogonal projections in this
$C^*$-algebra. In the class of simple AH algebras with slow
dimension growth, real rank zero and small eigenvalue variation
are equivalent (see \cite{bbek}, \cite{be}). Without dimension
hypothesis of AH algebras, we have not known the relations between
these properties.

The concept of {\it diagonal AH algebras} (AH algebra which can be
written as an inductive limit of homogeneous $C^*$-algebras with
diagonal connecting maps) was introduced in \cite{EHT} or
\cite{Ho}. Let us denote by $\mathcal{D}$ the class of diagonal AH
algebras. AF-, AI- and AT- algebras, Goodearl algebras \cite{Goodearl}
and Villadsen algebras of the first type \cite{TW} are diagonal AH algebras. 
Specially, the algebras constructed by Toms in \cite{tom} which have the same K-groups and tracial data but
different Cuntz semigroups are Villadsen algebras of the first
type and so belong to $\mathcal{D}$. This means that the class
$\mathcal{D}$ contains ``ugly'' and interesting $C^*$-algebras and
has not been classified by Elliott's program so far.

Note that the classification program of Elliott, the goal of which
is to classify amenable $C^*$-algebras by their K-theoretical
data, has been successful for many classes of $C^*$-algebras, in
particular for simple AH algebras with slow dimension growth (see,
e.g., \cite{Ell1}, \cite{EG}, \cite{EGL}). Unfortunately, for AH
algebras with higher dimension growth, very few information has
been known.

\vskip 3mm

In the first part of this paper (Section 2), we consider the LP property of inductive limits of
matrix algebras over $C^*$-algebras. The necessary and sufficient conditions for 
such an inductive limit to have the LP property will be presented in Theorem \ref{thrm1}. 
In particular, we will show
that an AH algebra $A = \underrightarrow{\lim}{(A_i,\phi_i)}$
(need not be diagonal nor simple) has the LP property provided
that the image of every continuous function on the spectrum of the
building blocks $A_i$ can be approximated by a linear combination of
projections in $A$ (Corollary \ref{cor1}). In Subsection \ref{bongbong}, 
using the idea of bubble sort, we can rearrange the entries on a diagonal element in $M_n(C(X))$ 
to obtain a new diagonal element with increasing entries such that the eigenvalue variations are the same 
(Lemma \ref{bubblesort}) and the eigenvalue variation of the latter element is easy to evaluate. 
As a consequence, it will be shown that a diagonal AH algebra has the LP property if it has small eigenvalue property 
(Theorem \ref{thrm2}) \textit{without any condition on the dimension growth}.

\vskip 3mm

It is well-known that the LP property of a $C^*$-algebra $A$ is inherited to the matrix tensor product $M_n(A)$ 
and the quotient $\pi(A)$ for any *-homomorphism $\pi$. 
But it is not stable under the hereditary subalgebra of $A$. 
In the second part of this paper (Section 3), we will present the stability of the LP property of an inclusion of a unital $C^*$-algebra 
with certain conditions and some examples illustrated the instability of such the property. 
More precisely, let $1 \in P \subset A$ be an inclusion of unital $C^*$-algebras with a finite Watatani index 
and $E \colon A \rightarrow P$ be a faithful conditional expectation. 
Then the LP property of $P$ can be inherited from that of $A$ provided that $E$ has the Rokhlin property 
in the sense of Osaka and Teruya (Theorem \ref{thm:index LP}). 
As a consequence, given a simple unital $C^*$-algebra $A$ with the LP property 
if an action $\alpha$ of a finite group $G$  onto $\mathrm{Aut}(A)$ has the Rokhlin property 
in the sense of Izumi, then the fixed point algebra $A^G$ and the crossed product $A \rtimes_\alpha G$ 
have the LP property (Theorem \ref{thm:crossed product LP}). 
Furthermore, we also give an example of a simple unital $C^*$-algebra with the LP property, 
but its fixed point algebra does not have the LP property (Example \ref{exa:fixed point}).

Let us recall some notations. Throughout the paper, $M_n$ stands for the algebra of all $n\times
n$ complex matrices, $e=\{ e_{st}\}_{s,t=\overline{1,n}}$ denotes
the standard basis of $M_n$ (for convenience, we also use this
system of matrix unit for any size of matrix algebras). Let us
denote by $M_n(C(X))$ the matrix algebra with entries from the algebra
$C(X)$ of all continuous functions on space $X$. If $X$ has
finitely many connected components $X_i$ and $ X = \sqcup_{i=1}^k X_i$, then
$$M_n(C(X)) = \oplus_{i=1}^k M_n(C(X_i)).$$
Hence, without lost of generality we can always assume the
spectrum of each component of a homogeneous $C^*$-algebra are
connected.

Denoted by $\textrm{diag}(a_{1}, a_{2}, \ldots, a_{n})$ the block
diagonal matrix with entries $a_1, a_2, \ldots, a_n$ in some algebras.

Let $A$ be a $C^*$-algebra. Any element $a$ in $A$ can be
considered as an element in $M_n(A)$ via the embedding $a
\rightarrow \textrm{ diag}(a,0)$. We also denote by $L(A)$ the closure of the set of all linear combinations of finitely many projections in $A$.

The last two authors appreciate Duy Tan University for their warm hospitality 
when they visited September, 2012 and the third author 
would also like to thank Teruya Tamotsu for fruitful discussions about the $C^*$-index theory.

\section{Linear span of projections in AH algebras}
\subsection{Linear span of projections in an inductive limit of matrix algebras over $C^*$-algebras}
Let $$A= \underrightarrow{\lim}{(A_i,\varphi_i)},$$ where $A_i $ = $\oplus_{t=1}^{k_i} M_{n_{it}}(B_{it})$ and $B_{it}$ are
$C^*$-algebras. Let $S_{it}$ be a spanning set of $B_{it}$ (as a
vector space) and $S_i$ be the union of $S_{it}$ for $t=1, \ldots,
k_i$. Since every element of a $C^*$-algebra can be written as a
sum of two self-adjoint elements, we can assume that all elements
of $S_i$ are self-adjoint.

\vskip 3mm

\begin{thrm}\label{thrm1}
Let $A$ be an inductive limit $C^*$-algebra as above. Then the
followings are equivalent:
\begin{enumerate}
\item[(i)] $A$ has the LP property;

\item[(ii)] for any integer $i$, any $x\in S_i$ and any
$\varepsilon>0$, there exists an integer $j\ge i$ such that
$\varphi_{ij}(x)$ can be approximated by an element in $L(A_j)$ to
within $\varepsilon$;

\item[(iii)] for any integer $i$, there exist a spanning set of
$A_i$ such that the images of all elements in that spanning set
under $\varphi_{i\infty}$ belong to $L(A)$.
\end{enumerate}
\end{thrm}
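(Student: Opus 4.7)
My plan is to show the cycle of implications (iii) $\Rightarrow$ (i) $\Rightarrow$ (ii) $\Rightarrow$ (iii).

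The implication (iii) $\Rightarrow$ (i) should be the quickest. If $T_i$ is the spanning set given by (iii), then $\varphi_{i\infty}(T_i) \subseteq L(A)$. Since $L(A)$ is a closed linear subspace of $A$ and the linear span of $T_i$ is (dense in) $A_i$, continuity of $\varphi_{i\infty}$ gives $\varphi_{i\infty}(A_i) \subseteq L(A)$. Taking the union over $i$ and using density of $\bigcup_i \varphi_{i\infty}(A_i)$ in $A$ together with closedness of $L(A)$, one concludes $A = L(A)$.

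For (i) $\Rightarrow$ (ii), fix $i$, $x \in S_i$ and $\varepsilon > 0$, and set $y = \varphi_{i\infty}(x)$. By (i), there is a linear combination $z = \sum_{k=1}^m \alpha_k p_k$ of projections $p_k \in A$ with $\|y - z\| < \varepsilon/3$. The key technical step is the standard lifting argument: each $p_k$ is the norm limit of a net $\varphi_{j_k,\infty}(a_k)$ with $a_k = a_k^* \in A_{j_k}$, and once $a_k$ is close enough to an actual projection (i.e., $\|a_k^2 - a_k\|$ is small), functional calculus with a cut-off function $\chi_{(\tfrac12,\infty)}$ produces a genuine projection $q_k \in A_{j_k}$ near $a_k$. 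Choosing $j \geq i$ larger than all $j_k$, we replace each $q_k$ by $\varphi_{j_k,j}(q_k) \in A_j$ and obtain $w = \sum \alpha_k \varphi_{j_k,j}(q_k) \in L(A_j)$ with $\|\varphi_{j\infty}(w) - z\|$ arbitrarily small, hence $\|\varphi_{j\infty}(\varphi_{ij}(x) - w)\| < \varepsilon/2$. Since for a $C^*$-algebra inductive limit $\|\varphi_{j\infty}(b)\| = \lim_{j' \to \infty} \|\varphi_{jj'}(b)\|$ with the sequence non-increasing, there exists $j' \geq j$ with $\|\varphi_{ij'}(x) - \varphi_{jj'}(w)\| < \varepsilon$, and $\varphi_{jj'}(w) \in L(A_{j'})$ as required.

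For (ii) $\Rightarrow$ (iii), I would simply reuse the set $S_i$ (or the induced spanning set of $A_i$ built from $S_i$ and the matrix units) and show that $\varphi_{i\infty}(x) \in L(A)$ for every $x \in S_i$. Given $\varepsilon > 0$, (ii) provides $j \geq i$ and $w \in L(A_j)$ with $\|\varphi_{ij}(x) - w\| < \varepsilon$; applying the contraction $\varphi_{j\infty}$, which maps projections to projections and hence $L(A_j)$ into $L(A)$, yields $\varphi_{j\infty}(w) \in L(A)$ with $\|\varphi_{i\infty}(x) - \varphi_{j\infty}(w)\| < \varepsilon$. Since $L(A)$ is closed, this forces $\varphi_{i\infty}(x) \in L(A)$.

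The main obstacle is clearly the (i) $\Rightarrow$ (ii) step, and specifically ensuring that the projections in the linear combination $z$ can be lifted simultaneously to projections at a single finite stage $A_j$; everything else is formal. This lift depends on the functional-calculus perturbation for near-projections, together with the elementary but essential fact that norms along an inductive system converge to the norm in the limit, which permits pushing the approximation obtained at infinity down to some $A_{j'}$.
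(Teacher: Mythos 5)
Your (iii) $\Rightarrow$ (i) and (i) $\Rightarrow$ (ii) are fine and agree with the paper; the paper dismisses (i) $\Rightarrow$ (ii) in one line (projections in an inductive limit are approximately liftable to finite stages), and your functional-calculus elaboration is the correct justification of that line. The genuine gap is in (ii) $\Rightarrow$ (iii). Recall that $S_i$ is a spanning set of $B_i$, \emph{not} of $A_i = M_{n_i}(B_i)$, so hypothesis (ii) only gives you information about the corner-embedded elements $\mathrm{diag}(x,0,\dots,0)$ with $x \in S_i$. Any spanning set of $A_i$ must also contain the off-diagonal generators $e_{st}\otimes x$ with $s\neq t$ (equivalently their self-adjoint symmetrizations $e_{st}\otimes x + e_{ts}\otimes x^*$), and your argument never shows that the images of these lie in $L(A)$: you prove $\varphi_{i\infty}(x)\in L(A)$ for $x\in S_i$ and then refer in passing to ``the induced spanning set built from $S_i$ and the matrix units'' without handling the off-diagonal members of that set. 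This is precisely the one non-formal step of the theorem, and it is the reason the statement is phrased in terms of spanning sets of the $B_{it}$ rather than of the $A_i$.

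The paper closes this gap with an explicit diagonalization: after a permutation placing the indices $s,t$ in the first two slots, the element $e_{st}\otimes x + e_{ts}\otimes x^*$ (with $x=x^*$) equals $u^*\,\mathrm{diag}(-x,\,x,\,0,\dots,0)\,u$ for the constant rotation unitary $u$ built from the $2\times 2$ block with entries $\pm\tfrac{1}{\sqrt{2}}$. Thus the off-diagonal generator is unitarily conjugate, already inside $A_i$, to a diagonal element whose nonzero entries are $\pm x$ with $x\in S_i$. Since conjugation by (the image of) such a unitary carries projections to projections and hence preserves $L(A_j)$, hypothesis (ii) applied to $x$ yields the required approximation of $\varphi_{ij}(e_{st}\otimes x + e_{ts}\otimes x^*)$ by elements of $L(A_j)$, and then the diagonal case you did treat finishes the proof. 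You need this reduction (or some equivalent device) for your cycle of implications to close.
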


\begin{proof}
The implication (iii) $\Longrightarrow$ (i) is obvious.

To prove the implication (i) $\Longrightarrow$ (ii), it suffices
to mention that every element (projection) in $A$ can be
arbitrarily closely approximated by elements (projections,
respectively) in $A_i$.

Let us prove the implication (ii) $\Longrightarrow$ (iii).
Clearly, without lost of generality we can assume that $A_i  = M_{n_i}(B_i)$ for every $i$. For a fixed integer $i$, we put
$$e\otimes S_{i} = \{e_{st}\otimes x + e_{ts}\otimes
x^*, x\in S_{i}\}.$$ Hence, there exists a unitary $u \in
M_{n_i}$ such that
\begin{equation}\label{equation1}
 u(e_{st}\otimes x + e_{ts}\otimes x)u^* = \left( \begin{array}{ccc}
                  0 & x & 0 \\
                   x  & 0 & 0\\
                   0 & 0 & 0
                  \end{array} \right),
\end{equation}
where the $0$ in the last collum and the last row is of order
$n_i-2$.

It is evident that every element in $A_i$ is a linear combination
of elements in $e\otimes S_{i} \cup D$, where $D$ is the set of
all diagonal elements with coefficients in $S_i$. Thus, $e\otimes
S_{i} \cup D$ is the spanning set of $A_i$. Now, we claim that
this spanning set satisfies the requirement of (iii).

Firstly, let $d$ = diag($x_1, \ldots, x_{n_i}$)\ ($x_t \in S_i$)
be an element in $D$. By (ii), $\varphi_{i \infty}(x_t) \in L(A)$ for every $t$. Hence
$\varphi_{i \infty}(d) \in L(A)$. Lastly, let $a \in e\otimes S_i$. By
Identity (\ref{equation1}), $a$ can be assumed to be
\begin{equation*}
\left( \begin{array}{ccc}
                  0 & x & 0 \\
                   x  & 0 & 0\\
                   0 & 0 & 0
                  \end{array} \right) \quad \text{for some}\quad x \in
                  S_i.
\end{equation*}
Moreover,
\begin{equation*}
a = u^* \left( \begin{array}{ccc}
                  -x & 0 & 0 \\
                   0  & x & 0\\
                   0 & 0 & 0
                  \end{array} \right) u, \quad \textrm{where}
                  \quad
u = \left( \begin{array}{ccc}
                  \frac{1}{\sqrt{2}} & \frac{1}{\sqrt{2}} & 0 \\
                   -\frac{1}{\sqrt{2}} & \frac{1}{\sqrt{2}} & 0\\
                   0 & 0 & 1
                  \end{array} \right).
                  \end{equation*}
In addition, there exists an integer $j\ge i$ such that
$\varphi_{ij}(x)$ can be approximated by an element of $L(A_j)$ to
within $\varepsilon$. Hence $\varphi_{ij}(a)$ can be approximated
by an element of $L(A_j)$ to within $\varepsilon$.
\end{proof}


\begin{cor}\label{cor1}
Let $A= \underrightarrow{\lim}{(A_i,\varphi_i)}$ be an AH algebra,
where $A_i = \oplus_{t=1}^{k_i} M_{n_{it}}(C(X_{it}))$ and
$X_{it}$ are connected compact Hausdorff spaces. Then the
followings are equivalent:
\begin{enumerate}
\item[(i)] $A$ has the LP property;

\item[(ii)] for any integer $i$, any $f \in
\cup_{t=1}^{k_i}C(X_{it})$ and any $\varepsilon>0$, there exists
an integer $j\ge i$ such that $\varphi_{ij}(f)$ can be
approximated by an element in $L(A_j)$ to within $\varepsilon$.
\end{enumerate}
\end{cor}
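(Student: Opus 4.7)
The plan is to derive this corollary as a direct specialization of Theorem \ref{thrm1} to the AH setting. For each $i$ and $t$ I would take as spanning set of $B_{it} = C(X_{it})$ the collection $S_{it}$ of all real-valued (hence self-adjoint) continuous functions on $X_{it}$. This is a complex-linear spanning set of $C(X_{it})$ because every $f \in C(X_{it})$ decomposes as $f = \tfrac{1}{2}(f+f^*) + i \cdot \tfrac{1}{2i}(f-f^*)$ into self-adjoint parts. Setting $S_i := \cup_{t=1}^{k_i} S_{it}$ puts us in exactly the framework of Theorem \ref{thrm1}.

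For the implication (ii)$\Longrightarrow$(i) of the corollary, the hypothesis is formulated for every $f \in \cup_t C(X_{it})$ and therefore holds in particular for every $f \in S_i$. This is precisely hypothesis (ii) of Theorem \ref{thrm1}, so Theorem \ref{thrm1} immediately yields the LP property of $A$.

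For the converse (i)$\Longrightarrow$(ii), Theorem \ref{thrm1} supplies the approximation property for every self-adjoint $x \in S_i$. Given an arbitrary $f \in C(X_{it})$ and $\e > 0$, I would write $f = f_1 + i f_2$ with $f_1, f_2 \in S_{it}$. Applying Theorem \ref{thrm1} to each summand produces indices $j_1, j_2 \ge i$ and elements $y_k \in L(A_{j_k})$ with $\|\varphi_{ij_k}(f_k) - y_k\| < \e/2$. Taking $j = \max(j_1, j_2)$ and pushing forward via $\varphi_{j_k j}$, which is a $*$-homomorphism and therefore carries $L(A_{j_k})$ contractively into $L(A_j)$, yields elements in $L(A_j)$ $\e/2$-close to $\varphi_{ij}(f_k)$. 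Since $L(A_j)$ is a complex subspace, the appropriate linear combination gives an $\e$-approximant of $\varphi_{ij}(f) = \varphi_{ij}(f_1) + i\,\varphi_{ij}(f_2)$ in $L(A_j)$.

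The entire content of the corollary thus reduces to Theorem \ref{thrm1} by this choice of spanning set; no substantive new obstacle arises. The only point requiring a moment of care is the passage between the self-adjoint condition appearing in Theorem \ref{thrm1} and the unrestricted condition on all continuous functions in the corollary, which is handled by the real/imaginary decomposition together with the fact that connecting maps preserve $L(\,\cdot\,)$.
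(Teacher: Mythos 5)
Your proposal is correct and is exactly the specialization the paper intends: the corollary is stated without proof as an immediate consequence of Theorem \ref{thrm1}, obtained by taking $S_{it}$ to be the self-adjoint (real-valued) elements of $C(X_{it})$ and handling general $f$ via the decomposition into real and imaginary parts. Your additional care with $j=\max(j_1,j_2)$ and the fact that the connecting $*$-homomorphisms map $L(A_{j_k})$ into $L(A_j)$ is the right way to make the reduction fully rigorous.
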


From the proof of Theorem \ref{thrm1}, we can obtain the following.
\begin{cor}
Let $A$ be a $C^*$-algebra. If $A$ has the LP property, then $A \otimes M_n$, $A \otimes K$ have the LP property, where $K$ is the algebra of compact operators on a separable Hilbert space.
\end{cor}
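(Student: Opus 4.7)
The claim is an almost immediate consequence of the technique (not the statement) of Theorem \ref{thrm1}, applied with $B_i = A$; so the plan is to re-execute the two key maneuvers---the matrix-unit decomposition and the unitary conjugation---directly inside $M_n(A)$, and then pass to the inductive limit.

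First I would observe that $L(M_n(A))$ is by definition a closed linear subspace of $M_n(A)$, and that the elementary tensors $e_{st} \otimes a$ ($1 \le s,t \le n$, $a \in A$) span a dense subspace of $M_n(A)$. It therefore suffices to show $e_{st} \otimes a \in L(M_n(A))$ for every $s,t$ and every $a \in A$. Since $L(A)$ is a self-adjoint closed subspace, writing $a = \tfrac{a+a^*}{2} + i\tfrac{a-a^*}{2i}$ reduces the problem to self-adjoint $x \in A = L(A)$.

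For the diagonal case $s = t$, any self-adjoint $x$ is a norm-limit of finite combinations $\sum_k \lambda_k p_k$ of projections in $A$, and each $e_{ss} \otimes p_k$ is a projection in $M_n(A)$. Hence $e_{ss} \otimes x \in L(M_n(A))$. For the off-diagonal case $s \ne t$, I would split
$$e_{st} \otimes x = \tfrac{1}{2}(e_{st}+e_{ts}) \otimes x + \tfrac{1}{2}(e_{st}-e_{ts}) \otimes x$$
into its self-adjoint and anti-self-adjoint parts. Both $e_{st}+e_{ts}$ and $i(e_{st}-e_{ts})$ are self-adjoint scalar matrices with spectrum $\{1,-1,0,\ldots,0\}$, so each equals $q_+ - q_-$ for rank-one projections $q_\pm \in M_n$, and each such $q_\pm$ is of the form $w e_{11} w^*$ for a scalar unitary $w \in M_n \subset M_n(A)$---this is precisely Identity (\ref{equation1}) of Theorem \ref{thrm1} in disguise. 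Therefore $q_\pm \otimes x$ is the unitary conjugate of $e_{11} \otimes x$ by $w \otimes 1$, and conjugation by a unitary carries $L(M_n(A))$ into itself. Combined with the diagonal case this places $e_{st} \otimes x$ in $L(M_n(A))$, proving the LP property for $M_n(A)$.

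Finally, $A \otimes K$ is the norm closure of $\bigcup_n M_n(A)$ along the corner embeddings $a \mapsto \textrm{diag}(a,0)$, which preserve $L$. By the previous paragraph every element of $M_n(A)$ lies in $L(M_n(A)) \subset L(A \otimes K)$, and since $L(A \otimes K)$ is closed and $\bigcup_n M_n(A)$ is dense in $A \otimes K$, we conclude $L(A \otimes K) = A \otimes K$. No serious obstacle is expected; the only subtlety is that elements of $L(\cdot)$ need only be \emph{limits} of finite linear combinations of projections rather than such combinations themselves, but this is absorbed everywhere by the fact that $L(\cdot)$ is closed by definition.
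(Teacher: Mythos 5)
Your proof is correct and follows essentially the same route as the paper, which simply invokes the proof of Theorem \ref{thrm1}: reduce to matrix units tensored with self-adjoint elements, handle the diagonal entries directly, and diagonalize the off-diagonal matrix units by conjugation with a scalar unitary (your $q_\pm = we_{11}w^*$ is exactly the paper's rotation $u\,\mathrm{diag}(-x,x,0)\,u^*$ step in spectral-projection form). If anything, your explicit treatment of the anti-self-adjoint part $i(e_{st}-e_{ts})\otimes x$ is slightly more careful than the paper's bare assertion that $e\otimes S_i\cup D$ spans $A_i$.
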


\subsection{Linear span of projections in a diagonal AH algebra}
For convenience of the reader, let us recall the notions from \cite{EHT}.
Let $X$ and $Y$ be compact Hausdorff spaces. A $\ast$-homomorphism
$\phi$ from $M_{n}(C(X))$ to $M_{nm+k}(C(Y))$ is said to be {\it
diagonal} if there exist continuous maps $\{ \la_i,\ i
=\overline{1,n}\}$ from $Y$ to $X$ such that
\begin{equation*}\phi (f) = \textrm{diag} (f \circ \la_1, f \circ
\la_2, \ldots, f \circ \la_n,0), \quad f \in M_{n}(C(X)),
\end{equation*}
where $0$ is a zero matrix of order $k$ ($k\ge 0$). If the size $k=0$, the map is unital.\\ 
The $\la_i$ are called the {\it eigenvalue maps} (or simply {\it
eigenvalues}) of $\phi$. The family $ \{ \la_1, \la_2, \ldots,
\la_m \} $ is called {\it the eigenvalue pattern} of $\phi$. In addition, let $p$ and $q$ be projections
in $M_n(C(X))$ and $M_{nm+k}(C(Y))$ respectively. An
$\ast$-homomorphism $\psi$ from $pM_{n}(C(X))p$ to
$qM_{nm+k}(C(Y))q$ is called \emph{diagonal} if there exists a
diagonal $\ast$-homomorphism $\phi$ from $M_{n}(C(X))$ to
$M_{nm+k}(C(Y))$ such that $\psi$ is reduced from $\phi$ on
$pM_n(C(X))p$ and $\phi (p) = q$. This definition can also be
extended to a $\ast$-homomorphism
$$ 
\phi : \oplus_{i=1}^n p_i M_{n_i}(C(X_i))p_i \rightarrow \oplus_{j=1}^m q_j M_{m_j}(C(Y_j))q_j   
$$
by requiring that each partial map
$$ 
\phi^{i,j}: p_i M_{n_i}(C(X_i))p_i \rightarrow q_j M_{m_j}(C(Y_j))q_j 
$$
induced by $\phi$ be diagonal.


\subsection{Eigenvalue variation}

Suppose that $B$ is a simple AH algebra. Then, $B$ has real rank zero if, and only if, its projections separate the traces provided that 
this algebra has slow dimension growth (see \cite{bdr}). 
This equivalence was first studied when the dimensions of the spectra of the building blocks in the inductive limit decomposition of $B$ are not more than two, 
see \cite{bbek}. 

Let $B$ be a C*-algebra. Suppose that
\begin{displaymath}
B = \bigoplus_{i=1}^{k} C(X_{i}) \otimes M_{n_{i}},
\end{displaymath}
where $X_{i}$ is a connected compact Hausdorff space for every $i$. Set $X$ = $\bigsqcup_{i=1}^{k} X_{i}$. 
The following theorem and notations are quoted from \cite{bbek} and \cite{bdr}.

Let $a$ be any self-adjoint element in $B$. For any $x$ in $X_{i}$, any positive integer $m$, $1 \le m \le n_{i}$, let $\la_{m}$ denote the $m^{\textrm{th}}$ lowest eigenvalue of $a(x)$ counted with multiplicity. So $\la_{m}$ is a function on each $X_{i}$, for $i =1, 2, \ldots, k$. The fact is 
\begin{displaymath}
|\la_{m} (x) - \la_{m} (y)| \le ||a (x) - a(y)||.
\end{displaymath}
Hence, $\la_{m}$ is continuous, for $m =1, 2, \ldots, k$ for a given summand of $B$.

The {\it variation of the eigenvalues} of $a$, denoted by $EV(a)$, is defined as the maximum of the nonnegative real numbers
\begin{displaymath}
\sup{ \{ |\la_{m}(x) - \la_{m}(y)|; \  x,y \in X_{i} \} },
\end{displaymath}
over all $i$ and all possible values of $m$.

The {\it variation of the normalized trace} of $a$, denoted by $TV (a)$, is defined as 
\begin{displaymath}
\sup{\{ |\frac{1}{n_{i}} \sum_{m=1}^{n_{i}}{(\la_{m}(x) - \la_{m} (y))} |; \  x, y \in X_{i} \} } = 
\sup{\{ | tr(a(x)) - tr(a(y)) | \  x, y \in X_{i} \} }
\end{displaymath}
over all $i$,
where $tr$ denotes the normalized trace of $M_{n}$ for any positive integer $n$.

\begin{thrm}[Blackadar, B.; Bratteli, O.; Elliott, G.; Kumjian, A.]\label{rr1}
Let $B$ be an inductive limit of homogeneous C*-algebras $B_{i}$ with morphisms $\phi_{ij}$ from $B_{i}$ to $B_{j}$. Suppose that $B_{i}$ has the form 
\begin{displaymath}
B_{i} = \bigoplus_{t = 1}^{k_{i}} M_{n_{it}}(C(X_{it})),
\end{displaymath}
where $k_{i}$ and $n_{it}$ are positive integers, and $X_{it}$ is a connected compact Hausdorff space for every positive integer $i$ and $1 \le t \le k_{i}$. 
Consider the following conditions:
\begin{enumerate}
 \item The projections of $B$ separate the traces on $B$.
 \item For any self-adjoint element $a$ in $B_{i}$ and $\epsilon > 0$, there is a $j \ge i$ such that 
  \begin{displaymath}
   TV(\phi_{ij}(a)) < \epsilon.
  \end{displaymath}
\item For any self-adjoint element $a$ in $B_{i}$ and any positive number $\epsilon $, there is a $j \ge i$ such that 
  \begin{displaymath}
  EV(\phi_{ij}(a)) < \epsilon.
  \end{displaymath}
\item $B$ has real rank zero.
\end{enumerate}

\begin{itemize}
\item[(i)] The following implications hold in general.
\begin{displaymath}
(4) \ \Rightarrow (3)\ \Rightarrow \  (2) \ \Rightarrow \ (1).
\end{displaymath}
\item[(ii)] If $B$ is simple, then the following equivalences hold.
\begin{displaymath}
(3) \ \Leftrightarrow \ (2) \ \Leftrightarrow \ (1).
\end{displaymath}
\item[(iii)] If $B$ is simple and has slow dimension growth, then all the conditions $(1)$, $(2)$, 
$(3)$ and $(4)$ are equivalent.
\end{itemize}
\end{thrm}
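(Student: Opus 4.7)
My plan is to handle the three parts of the theorem separately, starting with the easy chain of general implications in (i) and then addressing the harder converses under simplicity and slow dimension growth.

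For $(3) \Rightarrow (2)$ I would just use the pointwise estimate
\[
|tr(a(x))-tr(a(y))| = \Bigl|\tfrac{1}{n_i}\sum_m (\la_m(x)-\la_m(y))\Bigr| \le EV(a)
\]
in every summand, giving $TV(a) \le EV(a)$ always. For $(4) \Rightarrow (3)$, given self-adjoint $a \in B_i$ and $\e > 0$, real rank zero lets me approximate $\phi_{i\infty}(a)$ within $\e/2$ by a real linear combination of pairwise orthogonal projections of $B$; by semiprojectivity of finite-dimensional commutative $C^*$-algebras these projections can be pulled back to pairwise orthogonal projections in some $B_j$ with $j \ge i$. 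Because each projection in $M_{n_{jt}}(C(X_{jt}))$ over a connected space has constant rank, a real linear combination of mutually orthogonal such projections has eigenvalues that are constant on each $X_{jt}$, so the approximant has $EV = 0$ and hence $EV(\phi_{ij}(a)) < \e$.

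For $(2) \Rightarrow (1)$, I would argue by contradiction: if $\tau_1,\tau_2$ are traces on $B$ agreeing on every projection, then in particular they agree on the central summand projections $1_{B_{jt}}$, giving equal weights $\alpha_{jt}$ to each block. Restricted to $B_{jt} = M_{n_{jt}}(C(X_{jt}))$, each $\tau_k$ is $\alpha_{jt}$ times integration of the normalized trace function against a probability measure on $X_{jt}$. Given self-adjoint $a \in B_i$ and $\e > 0$, choose $j \ge i$ with $TV(\phi_{ij}(a)) < \e$: then on each summand the trace function is within $\e$ of a constant $c_{jt}$, forcing
\[
|\tau_1(\phi_{ij}(a))-\tau_2(\phi_{ij}(a))| \le 2\e \sum_t \alpha_{jt}.
\]
Letting $\e \to 0$ yields $\tau_1 = \tau_2$ on the dense union $\bigcup_i \phi_{i\infty}(B_i)$, hence on all of $B$.

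For (ii) the new implication is $(1) \Rightarrow (3)$ (the simple case). Under simplicity the tracial simplex is rich enough that the continuous affine function $\hat a$ on $T(B)$ can be uniformly approximated by combinations $\sum c_k \hat p_k$ with $p_k$ projections of $B$, since projections separate traces and simplicity allows us to split and rearrange projections to obtain affine density. Pulling such an approximation back to some $B_j$ gives $\phi_{ij}(a)$ approximated by a real linear combination of orthogonal projections in $B_j$, which has small $EV$ by the argument used for $(4) \Rightarrow (3)$. For (iii) the remaining implication is $(3) \Rightarrow (4)$: slow dimension growth allows spectral projections of $\phi_{ij}(a)$, partitioned along narrow intervals of its spectrum, to be realized as genuine orthogonal projections in $B_j$ rather than merely as functional-calculus elements, yielding norm approximation of $\phi_{ij}(a)$ by a linear combination of orthogonal projections. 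The main obstacle will be $(1) \Rightarrow (3)$ in Part (ii): trace separation is a $K_0$-level statement while small eigenvalue variation is pointwise spectral, and bridging this gap requires simplicity to guarantee that the tracial simplex encodes enough pointwise data. The step $(3) \Rightarrow (4)$ in Part (iii) is also delicate, relying on the Bratteli-Elliott-Kumjian lifting machinery where the dimension hypothesis is indispensable.
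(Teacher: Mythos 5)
The paper does not actually prove this theorem: it is quoted from the literature, and the paper's entire proof consists of citing \cite[Theorem~1.3]{bbek} for parts (i) and (ii) and deducing part (iii) from part (ii) together with \cite[Theorem~2]{bdr}. Your sketch of the chain $(4)\Rightarrow(3)\Rightarrow(2)\Rightarrow(1)$ in part (i) is essentially sound: $TV\le EV$ is immediate, a real linear combination of mutually orthogonal projections over a connected spectrum has constant eigenvalue functions (so $EV=0$), and a trace restricted to a block is integration of the normalized trace against a measure, so small $TV$ forces near-agreement of traces. (Each pull-back from $B$ to a finite stage needs the standard extra passage to a later index $j'$, since $\phi_{j\infty}$ need not be isometric, but that is routine.)

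The genuine gap is in your argument for $(1)\Rightarrow(3)$ in part (ii). Granting that the linear span of $\{\hat p : p \text{ a projection}\}$ is dense in $A(T(B))$ when projections separate traces, the step ``pulling such an approximation back to some $B_j$ gives $\phi_{ij}(a)$ approximated \emph{in norm} by a real linear combination of orthogonal projections in $B_j$'' is a non sequitur. Uniform closeness of $\hat a$ and $\sum_k c_k\hat p_k$ on the trace simplex gives no control whatsoever on $\|a-\sum_k c_k p_k\|$: the map $a\mapsto\hat a$ annihilates, for instance, every self-adjoint commutator $[b,b^*]$, which can have norm $1$ while $\hat a=0$. Moreover, if this step were valid it would establish $(1)\Rightarrow(4)$ (real rank zero) for \emph{every} simple AH algebra, a much stronger conclusion than the theorem claims and precisely the implication that both the theorem and the paper's introduction are careful to assert only under slow dimension growth; your own description of $(3)\Rightarrow(4)$ in part (iii) acknowledges that the dimension hypothesis is indispensable there, which is incompatible with your route to $(1)\Rightarrow(3)$. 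The actual content of $(1)\Rightarrow(2)\Rightarrow(3)$ for simple $B$ in \cite{bbek} is of a different nature: a compactness argument on traces arising as limits of point evaluations along the inductive system, together with a multiplicity argument converting small trace variation into small eigenvalue variation; no norm approximation by projections occurs. As written, part (ii) of your proposal is not a proof.
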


\begin{proof}
The statements (i) and (ii) are proved in Theorem 1.3 of \cite{bbek}. The statement (iii) is an immediate consequence of the statement (ii) and Theorem 2 of \cite{bdr}.
\end{proof}

An AH $C^*$-algebra $B$ is said to have {\it small eigenvalue variation} (see \cite{be}) if $B$ satisfies statement (3) of Theorem \ref{rr1}.


\subsection{Rearrange eigenvalue pattern}\label{bongbong}
In order to evaluate the eigenvalue variation \cite{be} of a diagonal element $a$ = diag($a_1, \ldots, a_n$) in $M_n(C(X))$, we need to rearrange the $a_i$ so that the obtained one $b$ = diag($b_1, \ldots, b_n$) with
$b_1\le b_2\le \ldots \le b_n$ has the same eigenvalue variation of $a$. 

The eigenvalue variations of two unitary equivalent self-adjoint
elements are equal since their eigenvalues are the same. However,
the converse need not be true in general. More precisely, there is a self-adjoint element $h$ in $M_2(C(S^4))$  which is not unitarily equivalent to $\textrm{diag}(\la_1,\la_2)$ but the eigenvalue variations of both elements are equal, where $\la_i$ is the $i^{\textrm{th}}$ lowest eigenvalue of $h$ counted with multiplicity \cite[Section 2]{Kadison}. In general, given a
self-adjoint element $h\in M_n(C(X))$, for each $x\in X$, there is
a (point-wise) unitary $u(x)\in M_n$ such that $h(x)$ = $u(x)\textrm{diag}(\la_1(x), \la_2(x), \ldots, \la_n(x))u^*(x)$,
where $\la_i(x)$ is the $i^{\textrm{th}}$ lowest eigenvalue of
$h(x)$ counted with multiplicity. Denote by $EV(h)$ the eigenvalue
variation of $h$, then $EV(h) = EV(\textrm{diag}(\la_1,\la_2,\ldots, \la_n))$ but $u(x)$ need not
be continuous. The fact is that if $u(x)$ is continuous for any self-adjoint $h$ in $M_n(C(X))$, then $\dim (X)$ is less than 3 \cite{Kadison}.  However, when replacing the equality ``='' by some
approximation ``$\approx$'' and in some spacial cases (diagonal
elements) discussed below, we can get such a continuous unitary without any hypothesis on dimension. Let us see the idea via the following example. 

Let $h$ = $\textrm{diag}(x, 1-x)\in M_2(C[0,1])$. Given
any $1/2> \varepsilon>0$. By \cite[Lemma 2.5]{EHT}, there is a
unitary $u\in M_2(C[0,1])$ such that
\begin{itemize}
\item $u(x) = 1 \in M_2,\  \forall x\in [0, \frac{1}{2}-\varepsilon]$
and \item $u(x) = \left( \begin{array}{cc}
              0 & 1 \\
              1 & 0
                      \end{array} \right),
                      \  \forall x\in [\frac{1}{2}+\varepsilon,1].$
\end{itemize}
Denote by $\la_1,\la_2$ the eigenvalue maps of $h$, i.e.,
\[ \la_1(x) =   \left\{ \begin{array}
                      {r@{\quad \textrm{ if } \quad}l}
              x & x \le \frac{1}{2} \\ 1-x & x> \frac{1}{2}
                      \end{array} \right.  \textrm{ and  }
      \la_2(x) =    \left\{ \begin{array}
                      {r@{\quad \textrm{ if } \quad}l}
              1-x & x \le \frac{1}{2} \\ x & x> \frac{1}{2}.
                      \end{array} \right.
                      \]
Then $EV(h) = EV(\textrm{diag}(\la_1,\la_2)) = \frac{1}{2}$.

It is straightforward to check that $||uhu^*  -
\textrm{diag}(\la_1,\la_2)||\le \varepsilon$.

\vskip 3mm

\begin{lemma}\label{bubblesort}
Let $X$ be a connected compact Hausdorff space and $h$ = $\textrm{diag}(f_1, f_2, \ldots, f_n)$ be a self-adjoint element
in $M_n(C(X))$, where $f_1, f_2, \ldots, f_n$ are continuous maps
from  $X$ to $\RR$. For any positive number $\varepsilon$, there is a
unitary $u \in M_n(C(X))$ such that
\[ ||uhu^* - \textrm{diag}(\la_1,\la_2, \ldots, \la_n) ||< \varepsilon, \]
where the $\la_i(x)$ is the $i^{\textrm{th}}$ lowest eigenvalue of
$h(x)$ counted with multiplicity for every $x\in X$.
\end{lemma}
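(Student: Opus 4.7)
The plan is to reduce the problem to iterated adjacent transpositions and to carry each one out by a continuous rotation that smoothly interpolates between the identity and the swap, using a buffer zone on which the two entries being compared are already close in value. This is essentially the construction illustrated in the example preceding the lemma, made systematic.

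The first step is an $n=2$ building block: given real-valued $f,g\in C(X)$ and a tolerance $\eta>0$, I would construct a unitary $v\in M_2(C(X))$ with
\[
\|v\,\textrm{diag}(f,g)\,v^{*} - \textrm{diag}(\min(f,g),\max(f,g))\| < C\eta
\]
for an absolute constant $C$. The sets
\[
K_{+}=\{x\in X : g(x)-f(x)\ge \eta\},\qquad K_{-}=\{x\in X : f(x)-g(x)\ge \eta\}
\]
are disjoint and closed, so Urysohn's lemma yields a continuous $\theta : X\to [0,\pi/2]$ that is $0$ on $K_{+}$ and $\pi/2$ on $K_{-}$. Setting
\[
v(x)=\begin{pmatrix}\cos\theta(x) & -\sin\theta(x)\\ \sin\theta(x) & \cos\theta(x)\end{pmatrix},
\]
one computes
\[
v\,\textrm{diag}(f,g)\,v^{*} = \begin{pmatrix}\cos^{2}\theta\cdot f+\sin^{2}\theta\cdot g & \tfrac12\sin(2\theta)(f-g)\\[2pt] \tfrac12\sin(2\theta)(f-g) & \sin^{2}\theta\cdot f+\cos^{2}\theta\cdot g\end{pmatrix}.
\]
On $K_{+}\cup K_{-}$ this equals the target exactly, while on the complement $|f-g|<\eta$ pointwise, so every entry lies within $\eta$ of its target.

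With this in hand, I would run the classical bubble sort on the diagonal of $h$: at the $k$th stage, embed the $2\times 2$ unitary produced above as the appropriate principal $2\times 2$ block of $M_{n}(C(X))$, extended by the identity on the remaining $n-2$ coordinates, and apply it to the current iterate. Bubble sort terminates in $N=n(n-1)/2$ adjacent comparisons, and since each step introduces at most $C\eta$ additional norm error relative to the exact sorted diagonal of its input, the errors accumulate additively to at most $CN\eta$. Choosing $\eta=\varepsilon/(CN)$ and letting $u$ be the product of the resulting $N$ block unitaries gives the claim.

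The main obstacle, and the point of the lemma, is continuity of $u$ across $X$: a general self-adjoint element of $M_{n}(C(X))$ cannot be diagonalised by a continuous unitary once $\dim X\ge 3$, as recalled just before the statement. What makes the diagonal case work is that we never need to resolve a true eigenvalue crossing; on the open set $\{x : |f_{i}(x)-f_{i+1}(x)|<\eta\}$ we are indifferent between swapping and not swapping, because the two entries there already agree to within $\eta$, and Urysohn's lemma converts exactly this slack into a continuous interpolating $\theta$. Consequently no hypothesis on $\dim X$ enters.
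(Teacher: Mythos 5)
Your proof is correct and takes essentially the same route as the paper's: both reduce to the $n=2$ case via bubble sort and then build the unitary from a Urysohn function composed with a continuous path in $U(2)$ from the identity to the swap, supported on the region where the two entries are $\eta$-close (your $K_\pm$ coincide with the paper's $E$ and $F$, since $\lambda_2-\lambda_1=|f-g|$). Your version is in fact slightly more explicit than the paper's on the two points it leaves implicit, namely the concrete rotation path and the additive accumulation of errors over the $n(n-1)/2$ exact compare--exchange stages.
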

\begin{proof}
If $f_1 \le f_2 \le \ldots \le f_n$, then the unitary $u$ is just the identity of
$M_n$ and $\la_i = f_i$. Therefore, to prove the lemma, we,
roughly speaking, only need to {\it rearrange} the given family
$\{ f_1, f_2, \ldots, f_n\}$ to obtain an increasing ordered
family. For $n=1$, the lemma is obvious. Otherwise, using the idea of
bubble sort, we can reduce to the case $n = 2$. 

Let $Z = {(\la_1-\la_2)}^{-1}(-\frac{\varepsilon}{2}, \frac{\varepsilon}{2})$.
Set
$E = \{ x \in X : f_{1}(x) \le f_{2}(x)  \} \cap (X \backslash Z) $
 and $ F = \{ x \in X : f_{1}(x) \ge f_{2}(x)  \} \cap
(X \backslash Z).$

It is clear that $E, F$ are disjoint closed sets and $X = E \cup F \cup Z$. 
We have $\la_1(x) = \min{\{f_1(x),f_2(x)\}}$ and $\la_2(x) = \max{\{ f_1(x),f_2(x)\}}$
for all $x\in X$. If $E$ ($F$) is empty, then the unitary $u$ can be chosen as 
$\left(
\begin{array}{cc}
              0 & 1 \\
              1 & 0
\end{array} \right)$ ($1 \in M_2$, respectively). Thus, we can assume
both $E$ and $F$ are non-empty. By Urysohn's Lemma, there is a
continuous map $\mu: X \longrightarrow [0,1]$ such that $\mu$ is equal to $0$ on $E$ and $1$ on $F$. Since
the space of unitary matrices  of $M_2$ is path connected, there is a unitary path $p$ linking 
\begin{equation*}
p(0) = 1 \quad \textrm{to} \quad p(1) = \left( \begin{array}{cc}
              0 & 1 \\
              1 & 0
                      \end{array} \right).
\end{equation*}
Consequently, $u = p \circ \mu$ is a unitary in $M_2(C(X))$ and 
$ \displaystyle u(x)h(x)u^*(x) = \textrm{diag} (\la_1(x),
\la_2(x))$ for all $x \in E\cup F$. 

For $x\in X \backslash (E\cup F) = Z$ we have
\begin{equation*}|\la_1(x)-\la_2(x)|<\frac{\varepsilon}{2} \textrm{  and  } 
|f_i(x)-\la_1(x)|<\frac{\varepsilon}{2}, \quad i=1,2.
\end{equation*} 
Hence,
\begin{equation}\label{equ1}
||\textrm{diag}(\la_1(x),\la_2(x))-\textrm{diag}(\la_1(x),\la_1(x))|<\frac{\varepsilon}{2}
\end{equation}
\textrm{  and  } 
\begin{equation}\label{equ2}
||h(x)-\textrm{diag}(\la_1(x),\la_1(x))|<\frac{\varepsilon}{2}.
\end{equation}
On account to (\ref{equ1}) and (\ref{equ2}) we have
\begin{equation*} 
\begin{split}
||u(x) h(x) u^*(x) &-\textrm{diag}(\la_1(x),\la_2(x))||  \\
& \le ||u(x)[h(x) -\textrm{diag}(\la_1(x),\la_1(x))]u^*(x)|| + \\
&\quad + ||\textrm{diag}(\la_1(x),\la_1(x)) - \textrm{diag}(\la_1(x),\la_2(x)) ||\\
&<\varepsilon.
\end{split}
\end{equation*}
Therefore, $$||uhu^*- \textrm{diag}(\la_1,\la_2)||<\varepsilon.$$
\end{proof}


The main result of this section is the following.

\begin{thrm}\label{thrm2}
Given an AH algebra $A= \underrightarrow{\lim} (A_i,\phi_i)$,
where the $\phi_i$ are diagonal $\ast$-homomorphisms from $A_i$ to
$A_{i+1}$, where $A_i = \oplus_{t=1}^{k_i} M_{n_{it}}(C(X_{it}))$ and the $X_{it}$ are connected compact Hausdorff spaces. If $A$ has
small eigenvalue variation, then $A$ has the LP property.
\end{thrm}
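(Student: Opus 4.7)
The plan is to apply Corollary \ref{cor1}: it suffices to show that for every $i$, every self-adjoint $f \in C(X_{it})$ (viewed in $A_i$ as $\mathrm{diag}(f,0) \in M_{n_{it}}(C(X_{it}))$), and every $\varepsilon > 0$, there exists $j \geq i$ such that $\phi_{ij}(f)$ is within $\varepsilon$ of an element of $L(A_j)$. (We may restrict to self-adjoint $f$ since every continuous function splits into real and imaginary parts.)

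First, I would unfold what diagonality of $\phi_{ij}$ means applied to $\mathrm{diag}(f,0)$: on each summand $M_{n_{js}}(C(X_{js}))$ of $A_j$, the partial map $\phi_{ij}^{t,s}$ sends $\mathrm{diag}(f,0)$ to a diagonal self-adjoint element of the form $\mathrm{diag}(f\circ\la_1,\ldots,f\circ\la_m,0,\ldots,0)$ for some continuous eigenvalue maps $\la_r \colon X_{js} \to X_{it}$. Thus $\phi_{ij}(f)$ is a direct sum (over $s$) of diagonal self-adjoint matrices over the connected spaces $X_{js}$.

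Next, I would invoke Lemma \ref{bubblesort} on each summand separately to produce a unitary $u \in A_j$ such that
\[
\| u\,\phi_{ij}(f)\,u^* - \mathrm{diag}(g_1^{(s)},\ldots,g_{n_{js}}^{(s)})_s \| < \varepsilon/3,
\]
where $g_1^{(s)} \le \cdots \le g_{n_{js}}^{(s)}$ is the ordered eigenvalue list of the $s$-th partial image on $X_{js}$. Now I would exploit small eigenvalue variation: choosing $j$ large enough we may assume $EV(\phi_{ij}(f)) < \varepsilon/3$, so each $g_k^{(s)}$ oscillates by less than $\varepsilon/3$ on the connected space $X_{js}$. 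Pick any point $x_s^0 \in X_{js}$ and set $c_k^{(s)} = g_k^{(s)}(x_s^0)$; then $\|g_k^{(s)} - c_k^{(s)} \cdot 1\| < \varepsilon/3$. Hence
\[
\bigl\| \mathrm{diag}(g_1^{(s)},\ldots,g_{n_{js}}^{(s)})_s - \textstyle\sum_{s,k} c_k^{(s)} e_{kk}^{(s)} \bigr\| < \varepsilon/3,
\]
with $e_{kk}^{(s)}$ the diagonal matrix unit projections in $A_j$. Conjugating back by $u^*$ produces
\[
\phi_{ij}(f) \approx_{2\varepsilon/3} u^* \Bigl( \sum_{s,k} c_k^{(s)} e_{kk}^{(s)} \Bigr) u = \sum_{s,k} c_k^{(s)} \bigl(u^* e_{kk}^{(s)} u\bigr),
\]
and since each $u^* e_{kk}^{(s)} u$ is a projection in $A_j$, the right-hand side lies in $L(A_j)$. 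The total error is bounded by $\varepsilon$ after adjusting the $\varepsilon/3$ budget, finishing the argument through Corollary \ref{cor1}.

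The main obstacle I expect is the bookkeeping: keeping track of multiple summand indices $t$ and $s$, making sure the unitary produced by Lemma \ref{bubblesort} can be assembled into a single global unitary in $A_j$, and verifying that the small eigenvalue variation hypothesis transfers cleanly to the ordered eigenvalue functions $g_k^{(s)}$ (this last step is built into the definition of $EV$, but one must check that rearranging entries does not disturb the variation bound — indeed it cannot, since $g_k^{(s)}$ is by definition the $k$-th lowest eigenvalue of $\phi_{ij}(f)$ on $X_{js}$). Once these technicalities are settled, the proof reduces to an $\varepsilon/3$-triangle-inequality argument.
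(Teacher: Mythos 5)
Your proposal is correct and follows essentially the same route as the paper: reduce via Corollary \ref{cor1} to a single real-valued $f$, use small eigenvalue variation to choose $j$, apply Lemma \ref{bubblesort} to (a permutation of) the diagonal image $\phi_{ij}(f)$, and replace each ordered eigenvalue function by a constant to land in the span of the conjugated diagonal matrix-unit projections. The only differences are cosmetic — your $\varepsilon/3$ budgeting versus the paper's final $2\varepsilon$ bound, evaluating at a base point rather than taking the midpoint of the range, and carrying the summand indices explicitly where the paper reduces to one component at the outset.
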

\begin{proof}
By Corollary \ref{cor1}, it suffices to show that $\phi_{i
\infty}(f) \in L(A)$ for every real-valued function $f\in
C(X_{it})$. By the same argument in the proof of Theorem
\ref{thrm1}, we can assume that each $A_t$ has only one component,
that is, $A_t = M_{n_t}(C(X_t))$. Let $\varepsilon > 0$ be arbitrary.
Since $A$ has small eigenvalue variation, there is an integer $j
\ge i$ such that $EV(\phi_{ij}(f))<\varepsilon$. Let $\{\mu_1,\ldots,
\mu_n\}$ be the eigenvalue pattern of $\phi_{ij}$ ($n = n_j/n_i$).
Then,
\begin{equation*}
\begin{split}
\phi_{ij}(f) &= \phi_{ij}(\textrm{diag}(f,0)) \\
&= \textrm{diag}(f \circ \mu_1, 0, f \circ \mu_2, 0, \ldots, f \circ \mu_n,0 ) \\
&= v \textrm{diag}(f_1, f_2, \ldots, f_n,0) v^*,  \end{split}
\end{equation*}
where $ f_i = f \circ \mu_i$ and $v$ is the permutation matrix in
$M_{n_j}$ moving all the zero to the bottom left-hand corner.
Note that 
\begin{equation*}
EV(\phi_{ij}(f)) = EV(\textrm{diag}(f_1, f_2, \ldots, f_n,
f_{n+1})),\end{equation*} 
where $f_{n+1}(x) = 0$ for all $x\in X_j$. By Lemma
\ref{bubblesort}, there exists a unitary $u \in M_{n+1}(C(X_j))$
and eigenvalue maps $\la_1\le \la_2\le \ldots \le \la_{n+1}$ of
$\textrm{diag}(f_1, f_2, \ldots, f_n, f_{n+1})$ such that
\[|| u \textrm{diag}(f_1, f_2, \ldots, f_n, f_{n+1}) u^* - \textrm{diag}(\la_1, \ldots, \la_{n+1})  ||<\varepsilon.\]

Put $$\displaystyle \delta_i = \frac{1}{2}(\max_{x\in X_j}{\la_i(x)}+
\min_{x\in X_j}{\la_i(x)}), \quad i=1,2,\ldots, n+1.$$ Then for any $i$ we have
$$\max\limits_{x\in X_j}{\la_i(x)} - \min\limits_{x\in
X_j}{\la_i(x)} \le EV(\phi_{ij}(f)) <\varepsilon $$
and so
$$|\la_i(x)-\delta_i|<\varepsilon, \quad \forall x\in X_j.$$ 
Thus,
\[|| u \textrm{diag}(f_1, f_2, \ldots, f_n, f_{n+1}) u^* - \sum_{i=1}^{n+1} \delta_i e_{ii}  ||<2 \varepsilon, \]
where $\{e_{ij}\}$ is the standard basis of $M_{n+1}$. This
implies that $$||\phi_{ij}(f)- b||<2\varepsilon,$$ where $b = v^*
\textrm{diag}(u^*(\sum_{i=1}^{n+1} \delta_i e_{ii}) u,0) v$ is a linear combination of projections in $A_j$. 

Therefore,
$$\phi_{i \infty}(f) \in L(A).$$
\end{proof}

\subsection{Another form of Theorem \ref{thrm2}.}

\begin{lemma} \label{cutdown}
Let $B$ be a $C^*$-algebra,  $p$ and $q$ be projections
in $B$. If $p$ and $q$ are Murray-von Neumann equivalent, then $p
B p$ is isomorphic to $q B q$.

In particular, if $B = M_n(C(X))$ (where $X$ is a connected compact Hausdorff
space) and $q$ is a constant projection of rank
$m$ in $B$, then $qBq$ is $\ast$-isomorphic to $M_m(C(X))$.
\end{lemma}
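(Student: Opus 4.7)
The plan is to prove the two assertions separately; both are essentially standard structural facts about corners of $C^*$-algebras and should go through without any serious obstacle.

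For the first statement, I would use the partial isometry implementing Murray-von Neumann equivalence. Since $p \sim q$ in $B$, there exists $v \in B$ with $v^*v = p$ and $vv^* = q$. Define $\phi \colon pBp \to qBq$ by $\phi(x) = vxv^*$. First I would verify that $\phi$ actually lands in $qBq$: for $x \in pBp$ we have $x = pxp$, so $vxv^* = v(v^*v)x(v^*v)v^* = (vv^*)vxv^*(vv^*) = q\,vxv^*\,q$. Next, $\phi$ is clearly linear and $*$-preserving, and multiplicativity follows from $v^*v = p$ acting as the identity on $pBp$: $\phi(x)\phi(y) = vxv^*vyv^* = vx p y v^* = v(xy)v^* = \phi(xy)$. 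The analogous formula $\psi(y) = v^*yv$ defines a map $qBq \to pBp$ which is a two-sided inverse to $\phi$ by the same computations, so $\phi$ is a $*$-isomorphism.

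For the second statement I would take the constant projection $q$ to correspond to a single projection $q_{0} \in M_{n}$ of rank $m$, so $q(x) = q_{0}$ for all $x \in X$. By elementary linear algebra, $q_{0}$ is unitarily equivalent in $M_{n}$ to the standard projection $e = \sum_{i=1}^{m} e_{ii}$; pick $u_{0} \in M_{n}$ unitary with $u_{0} q_{0} u_{0}^{*} = e$. Viewing $u_{0}$ as a constant unitary in $M_{n}(C(X))$, one has $u_{0} q u_{0}^{*} = e$ in $B$. Hence $q$ and $e$ are unitarily equivalent (in particular Murray-von Neumann equivalent) in $B$, so by the first part $qBq \cong eBe$.

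Finally I would identify $eBe = e M_{n}(C(X)) e$ with $M_{m}(C(X))$. An element of $eBe$ is a continuous function $f \colon X \to e M_{n} e$, and the natural identification $eM_{n}e \cong M_{m}$ (which sends a matrix with support in the top-left $m\times m$ block to that block) extends pointwise to a $*$-isomorphism $e M_{n}(C(X)) e \cong M_{m}(C(X))$. Composing with the isomorphism from the previous step finishes the proof. There is no real obstacle in this argument; the only point requiring minor care is verifying that $\phi(x) = vxv^{*}$ genuinely maps into $qBq$ and is surjective onto it, but this is immediate from the partial-isometry relations $v^{*}v = p$ and $vv^{*} = q$.
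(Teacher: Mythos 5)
Your proof is correct and follows essentially the same route as the paper: the first part via the conjugation maps $\phi(x)=vxv^*$ and $\psi(y)=v^*yv$ induced by the partial isometry, and the second part by identifying the corner at a constant rank-$m$ projection with $M_m(C(X))$. Your version is slightly more careful in the second step (explicitly conjugating $q$ to the standard projection $\sum_{i=1}^m e_{ii}$ by a constant unitary, where the paper simply asserts $qBq=M_m(C(X))$), but this is an elaboration of the same argument rather than a different approach.
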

\begin{proof}
By assumption, there exists a partial isometry $v$ such that $p $ = $v^{*} v $ and
$q$ = $v v^{*}$. Let us condiser the following maps 
$$\phi (x) = v x v^{*}\ (x \in p B p) \textrm{  and  } \psi (y) = v^{*} y v\ (y \in qBq).$$
It is straightforward to check that the compositions of $\phi$ and $\psi$ are the identity maps.

In the case $B = M_n(C(X))$ and $q$ is a constant projection of rank $m$ in
$B$, we have $qBq=M_m(C(X))$. Therefore, $pBp$ is $\ast$-isomophic to $M_m(C(X)).$
\end{proof}

\vskip 3mm

\begin{thrm}[Another form of Theorem \ref{thrm2}]
Let $A= \underrightarrow{\lim}(A_i, \phi_i)$ be a diagonal AH
algebra, where the $p_{it}$ are projections in $M_{n_{it}}(C(X_{it}))$, 
$A_i = \oplus_{t=1}^{k_{i}} p_{it} M_{n_{it}}(C(X_{it}))p_{it}$ and the $\phi_i$ are unital diagonal. 
Suppose that each projection $p_{1t}$ is Murray-von
Neumann equivalent to some constant projection in $A_1$, then $A$
has the LP property provided that $A$ has small eigenvalue
variation.
\end{thrm}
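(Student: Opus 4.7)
The overall strategy is to reduce to Theorem \ref{thrm2} by replacing each cut-down algebra $p_{it} M_{n_{it}}(C(X_{it})) p_{it}$ with an honest matrix algebra $M_{m_{it}}(C(X_{it}))$ via Lemma \ref{cutdown}, thereby realizing $A$ as a diagonal AH algebra in the sense required by Theorem \ref{thrm2}. The hypothesis only gives us the equivalence-to-constant property at level $1$, so the first task is to propagate it to every level.

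First, I would observe that any unital diagonal $*$-homomorphism sends constant projections to constant projections: if $\tilde{\phi}\colon M_{n}(C(X)) \to M_{n'}(C(Y))$ is diagonal with eigenvalues $\la_1,\ldots,\la_s$, and $q \in M_n$ is a (scalar) constant projection, then $\tilde{\phi}(q) = \mathrm{diag}(q,\ldots,q,0)$ is again constant. Combining this with the fact that $*$-homomorphisms preserve Murray--von Neumann equivalence, an inductive argument shows that every $p_{it}$ is Murray--von Neumann equivalent to a constant projection $q_{it}$ of some rank $m_{it}$ in $M_{n_{it}}(C(X_{it}))$. Applying Lemma \ref{cutdown} at every level, I obtain isomorphisms $A_i \cong B_i := \bigoplus_{t=1}^{k_i} M_{m_{it}}(C(X_{it}))$.

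Next, let $\psi_i\colon B_i \to B_{i+1}$ be the connecting map induced from $\phi_i$ via these isomorphisms. To apply Theorem \ref{thrm2}, I need $\psi_i$ to be a diagonal $*$-homomorphism between the $B_i$ (which are now direct sums of full matrix algebras over the same spaces $X_{it}$). I would argue this by choosing the Murray--von Neumann implementing partial isometries so that $q_{it}$ sits in the upper-left corner of $M_{n_{it}}$; then the eigenvalue pattern of $\tilde{\phi}_i$, restricted to the corners, gives an eigenvalue pattern for $\psi_i$ realizing it as diagonal. Once this is in place, the small eigenvalue variation of $A$ relative to the $(A_i,\phi_i)$ system transfers verbatim to the $(B_i,\psi_i)$ system, since eigenvalue variation of a self-adjoint element in $p M_n(C(X)) p$ agrees, up to the appended zero eigenvalues, with that of its image in the full matrix algebra under the cutdown isomorphism. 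Theorem \ref{thrm2} then yields the LP property for $\underrightarrow{\lim}(B_i,\psi_i) \cong A$.

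The main obstacle is the bookkeeping in the previous paragraph: verifying that the connecting maps $\psi_i$ are indeed diagonal after conjugating by the partial isometries implementing $p_{it} \sim q_{it}$. The definition of diagonal is rigid (the matrix must literally be block-diagonal with eigenvalue compositions along the diagonal), and conjugation by an arbitrary unitary destroys this form. The fix is to use the freedom in choosing the partial isometry: one picks $v_{it}$ so that $v_{it} v_{it}^* = q_{it}$ is the rank-$m_{it}$ corner projection, and then the induced map from $M_{m_{it}}(C(X_{it})) \cong q_{it} M_{n_{it}}(C(X_{it})) q_{it}$ inherits the eigenvalue pattern of $\tilde\phi_i$ directly, simply by restricting the eigenvalue maps. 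This is a routine but slightly tedious check in the style of the proofs already present in the excerpt, after which the reduction to Theorem \ref{thrm2} is immediate.
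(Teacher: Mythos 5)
Your proposal follows essentially the same route as the paper: propagate the constant projection to every level, cut down via Lemma \ref{cutdown} to replace each corner by a full matrix algebra, verify the induced connecting maps are diagonal, and invoke Theorem \ref{thrm2}. The ``main obstacle'' you flag is resolved in the paper exactly in the spirit you suggest, by making the choices coherent across levels --- setting $q_i = \phi_{1i}(q_1)$ and $v_i = \phi_{1i}(v_1)$, so that each $\psi_i$ is literally the restriction of the diagonal extension $\tilde{\phi}_i$ to the constant corner and the intertwining $\Theta_{i+1}\circ\phi_i = \psi_i\circ\Theta_i$ holds automatically.
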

\begin{proof}
We can assume that $A_i = p_iM_{n_i}(C(X_i))p_i, \ \forall i$. It is easy to see that $p_1$ is Murray-von
Neumann equivalent to $ q_1=e_{11}+e_{22}+\ldots + e_{m_1}$, where $m_1$ is the rank of $p_1$. For $i>1$, define $q_i = \phi_{i-1}(q_{i-1})$. Then
$q_i = \phi_{1i}(q_1)$ is constant, since $q_1$ is constant and
$\phi_{1i}$ is diagonal. Let us denote by $m_i $ the rank of
$q_i$, then $m_{i+1}|m_i$. By Lemma \ref{cutdown}, there are
$\ast$-isomorphisms $\Theta_i$ from $p_i M_{n_i}(C(X_i))p_i$ to $q_i M_{n_i}(C(X_i))q_i = M_{m_i}(C(X))$ such that
\begin{equation*}
\Theta_i(a) = v_i a v_{i}^*, 
\end{equation*}
where $p_1 = v_1^*v_1 \sim v_1v_1^* = q_1 $ and $v_i =\phi_{1i}(v_1)$. Since $\phi_i$ is diagonal, 
there exists its extension $\tilde{\phi_i}$ which is a diagonal $\ast$-homomorphism from $M_{n_i}(C(X_i))$ to $M_{n_{i+1}}(C(X_{i+1}))$. Let $\psi_i$ be the restriction of $\tilde{\phi_i}$ on
$q_iM_{n_i}(C(X_i))q_i$. Then $\psi_i (q_i) = q_{i+1}$. Therefore, the map $\psi_i$ can be viewed as the map
from $q_iM_{n_i}(C(X_i))q_i$ to $q_{i+1}M_{n_{i+1}}(C(X_{i+1}))q_{i+1}$ and so $\underrightarrow{\lim}(M_{m_i}(C(X_i)),\psi_i)$ is a diagonal AH-algebra. 

On another hand, it is straightforward to check that  $\Theta_{i+1}\circ \phi_i = \psi_i \circ \Theta_i$ and hence $A=\underrightarrow{\lim}(M_{m_i}(C(X_i)),\psi_i)$. By
Theorem \ref{thrm2}, $A$ has the LP property.
\end{proof}

\subsection{Examples}

In some special cases, the small eigenvalue variation and the LP property are equivalent. 

\vskip 3mm

\begin{example}\label{GoodearlLP}
Let $A= \underrightarrow{\lim}(M_{\nu (n)}(C(X)),\phi_n)$ be a Goodearl algebra \cite{Goodearl} and $\omega_{t,1}$ be
the weighted identity ratio for $\phi_{t,1}$. Suppose that $X$ is not totally disconnected and has finitely many connected components, the followings are equevalent:
\begin{itemize}
\item[(i)] $A$ has real rank zero. 
\item[(ii)] $\displaystyle
 \lim_{t\rightarrow \infty}{\omega_{t,1}} = 0$. 
\item[(iii)] $A$ has small eigenvalue variation. 
\item[(iv)] $A$ has the LP property.
\end{itemize}
\end{example}

\begin{proof}
Indeed, (i) and (ii) are equivalent by \cite[Theorem 9]{Goodearl} . The implication (i) $\Longrightarrow$
(iii) follows from \cite[Theorem 1.3]{bbek}. By \cite[Theorem 2.6]{BP}, (i) implies (iv). Using Theorem
\ref{thrm2} we get the implication (iii) $\Longrightarrow$ (iv). Finally, (iv) implies (ii) by \cite[Theorem 6]{Goodearl}.
\end{proof}

\vskip 3mm

In general, the LP property can not imply the small eigenvalue variation nor real rank zero.  For example, let $A$ be a simple AH algebra with slow
dimension growth and $H$ be a simple hereditary $C^*$-subalgebra of $A$. By \cite[Theorem 3.5]{Ho}, $H$ has non-trivial
projections. Hence, $H \otimes K$ has the LP property by
\cite[Corollary  5]{GP} . However, $A$ has real rank zero if and only if it has small eigenvalue variation
\cite{be}. It means that we can choose $H$ such that the real rank of $H$ is not zero. 

Looking for examples in the class of diagonal AH algebras, we need the following lemma.

\begin{lemma}\label{tensordiagonal}
Let $A$ be a diagonal AH algebra and  $K$ be the $C^*$-algebra of compact operators on an infinite dimensional Hilbert space. Then the tensor product $A\otimes K$ is again diagonal.
\end{lemma}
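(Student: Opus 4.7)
The plan is to realise $A \otimes K$ as a diagonal inductive limit by tensoring a diagonal decomposition of $A$ with the standard nested decomposition of $K$. Write $K = \underrightarrow{\lim}(M_i, \iota_i)$, where $\iota_i \colon M_i \hookrightarrow M_{i+1}$ is the corner embedding $a \mapsto \textrm{diag}(a, 0)$. Since $K$ is nuclear and the diagonal indexing is cofinal,
$$A \otimes K \;=\; \underrightarrow{\lim}\bigl(A_i \otimes M_i,\; \phi_i \otimes \iota_i\bigr).$$
If $A_i = \oplus_t\, p_{it} M_{n_{it}}(C(X_{it})) p_{it}$, then each tensor product reads $A_i \otimes M_i = \oplus_t (p_{it} \otimes 1_{M_i})\, M_{n_{it} i}(C(X_{it}))\,(p_{it} \otimes 1_{M_i})$, which is already of the shape required of a building block in a diagonal AH system. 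The entire problem thus reduces to showing that each connecting map $\phi_i \otimes \iota_i$ is diagonal.

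The key step is the following sub-lemma: if $\phi \colon M_n(C(X)) \to M_{nm+k}(C(Y))$ is diagonal with eigenvalue pattern $\{\lambda_1, \ldots, \lambda_m\}$ and $\iota \colon M_s \hookrightarrow M_{s+1}$ is the corner inclusion, then $\phi \otimes \iota$, viewed as a map $M_{ns}(C(X)) \to M_{(nm+k)(s+1)}(C(Y))$, is diagonal with the same eigenvalue pattern $\{\lambda_1, \ldots, \lambda_m\}$. To verify this I would perform a direct Kronecker-product computation: the block expansion
$$\textrm{diag}(f \circ \lambda_1, \ldots, f \circ \lambda_m, 0_k) \otimes \textrm{diag}(a, 0_1)$$
regroups, after a constant permutation of the basis of $\CC^{(nm+k)(s+1)}$, into
$$\textrm{diag}\bigl((f \otimes a) \circ \lambda_1,\, \ldots,\, (f \otimes a) \circ \lambda_m,\; 0_{nm + ks + k}\bigr),$$
using the identity $(f \circ \lambda_j) \otimes a = (f \otimes a) \circ \lambda_j$ and the fact that any Kronecker product with a zero block produces a zero block. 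The permutation unitary is fixed (independent of $f \otimes a$) and can be absorbed into the identification $M_p \otimes M_q \cong M_{pq}$.

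For the cornered partial maps $\phi_i^{t,s}\colon p_{it} M_{n_{it}}(C(X_{it})) p_{it} \to q_{is} M_{m_{is}}(C(Y_{is})) q_{is}$, diagonality provides, by definition, a diagonal extension to the ambient full matrix algebras that sends $p_{it}$ to $q_{is}$. Tensoring this extension with $\iota_i$ and applying the sub-lemma produces a diagonal $\ast$-homomorphism of the full matrix algebras $M_{n_{it} i}(C(X_{it})) \to M_{m_{is}(i+1)}(C(Y_{is}))$ that carries $p_{it} \otimes 1_{M_i}$ to $q_{is} \otimes \iota_i(1_{M_i})$; the restriction to the corner $(p_{it} \otimes 1_{M_i})\,M_{n_{it} i}(C(X_{it}))\,(p_{it} \otimes 1_{M_i})$ is precisely $\phi_i^{t,s} \otimes \iota_i$, hence diagonal.

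The main technical nuisance, rather than a genuine obstacle, is the permutation-unitary bookkeeping required to identify tensor products of matrix algebras with single matrix algebras; once a fixed convention for $M_p \otimes M_q \cong M_{pq}$ is chosen, the verification collapses to a routine index rearrangement.
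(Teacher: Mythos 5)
Your proposal is correct and follows essentially the same route as the paper: realise $A\otimes K$ as $\underrightarrow{\lim}(A_i\otimes M_i,\phi_i\otimes\iota_i)$ and check that each tensored connecting map becomes diagonal after a fixed permutation of the matrix units, the permutation being absorbed by unitary equivalence of the inductive systems. The only difference is cosmetic: you spell out the Kronecker-product bookkeeping and the cornered case explicitly, whereas the paper states the row-and-column interchange more briefly.
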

\begin{proof}
Let $A$ = $ \underrightarrow{\lim}(A_n, \phi_n)$ and $K$ = $\underrightarrow{\lim}(M_n, i_n)$, where $A_n$ is a homogeneous algebra, $\phi_n$ is an injective diagonal homomorphism from $A_n$ to $A_{n+1}$ and $i_n$ is the embedding from $M_n$ to $M_{n+1}$ which  associates each $a\in M_n$ to diag($a,0$) $ \in M_{n+1}$ for each positive integer $n$. 
Let us consider the inductive limit $ \underrightarrow{\lim}(A_n\otimes M_n, \phi_n\otimes i_n)$. 
For each integer $n\ge 1$, denote by $i_{n,\infty}$, $\phi_{n,\infty}$ the homomorphisms from $M_n$, $A_n$ to $K$, $A$ in the inductive limit of $K$, $A$ respectively. Then 
$$ (\phi_{n+1,\infty}\otimes i_{n+1,\infty})\circ (\phi_n \otimes i_n) = \phi_{n,\infty}\otimes i_{n,\infty}.$$
Hence, by the universal property of inductive limit, there exists a unique homomorphism $\Phi$ from 
$ \underrightarrow{\lim}(A_n\otimes M_n, \phi_n\otimes i_n)$ to $A\otimes K$ such that 
$$ \Phi \circ (\phi_n \otimes i_n) = (\phi_{n,\infty} \otimes i_{n,\infty}). $$
It is straightforward to check that the image of $\Phi$ is dense in $A\otimes K$ and 
since all the maps $\phi_n,i_n$ are injective, we have $ \underrightarrow{\lim}(A_n\otimes M_n, \phi_n\otimes i_n)$ is $A\otimes K$. 
Furthermore, for each $n$, we identify an element $a\otimes b$ in $A_n \otimes M_n$ with the matrix $(a_{ij}b)$ in $M_n(A_n)$, 
where $a=(a_{ij}) \in M_n$ and $b\in A_n$. By interchanging rows and columns (independent of $a\otimes b$) of $(\phi_n\otimes i_n)(a\otimes b)$, 
we obtain diag($a\otimes b \circ \lambda_1, \ldots, a\otimes b \circ \lambda_m,0$), where $\lambda_1, \ldots, \lambda_m$ are the eigenvalue maps of $\phi_n$. 
This means that there is a permutation matrix $u_n\in M_{n+1}(A_{n+1})$ such that $u_n \phi_n\otimes i_n u_n^*$ is diagonal. 
The fact is that the inductive limit is unchanged under unitary equivalence, that is,
$$\underrightarrow{\lim}(A_n\otimes M_n, \phi_n\otimes i_n) = \underrightarrow{\lim}(A_n\otimes M_n, u_n \phi_n\otimes i_n u_n^*). $$
Hence,  $\underrightarrow{\lim}(A_n\otimes M_n, \phi_n\otimes i_n)$ is diagonal.
\end{proof}

\begin{example}
Let $B$ be a simple unital diagonal AH algebra with real rank one but does not have the LP property (for example, take a Goodearl algebra, see Example \ref{GoodearlLP}). 
Then $B\otimes K$ is also a diagonal AH algebra of real rank one with the LP property.
\end{example}
\begin{proof}
By Lemma \ref{tensordiagonal}, $B\otimes K$ is also a diagonal AH algebra. The real rank of $B\otimes K$ is one since that of $B$ is non-zero.
Since $B$ is unital, $B\otimes K$ has a non-trivial projection. By \cite[Corollary  5]{GP}, $B\otimes K$ has the LP property.
\end{proof}


\section{The LP property for an inclusion of unital $C^*$-algebras}
\subsection{Examples}

In this subsection we will show that the LP property does not stable under the fixed point operation via giving examples. 
Firstly, we could observe the following example which shows that the LP property is not stable under the hereditary subalgebra.

\vskip 3mm

\begin{lemma}\label{lem:LP}
Let $A$ be a projectionless simple unital $C^*$-algebra with a unique tracial state. 
Then for any $n \in \NN$ with $n > 1$, $M_n(A)$ has the LP property.
\end{lemma}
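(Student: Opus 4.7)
The strategy is to decompose an arbitrary self-adjoint $X\in M_n(A)$ into three parts---its off-diagonal part, its traceless diagonal part, and its scalar-diagonal part $a\otimes 1_{M_n}$---and show each lies in $L(M_n(A))$. The off-diagonal and traceless-diagonal pieces are handled by explicit rank-one projections, while the scalar-diagonal piece requires the unique-trace hypothesis via the Cuntz-Pedersen commutator theorem.

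\emph{Off-diagonal and skew-diagonal.} For each positive contraction $a\in A$ and each pair $i\neq j$, the matrices $p_{ij}(a):=a\,e_{ii}+\sqrt{a(1-a)}(e_{ij}+e_{ji})+(1-a)\,e_{jj}$ and its skew variant $a\,e_{ii}-i\sqrt{a(1-a)}\,e_{ij}+i\sqrt{a(1-a)}\,e_{ji}+(1-a)\,e_{jj}$ are rank-one projections in $M_n(A)$, as a direct computation shows. The identities
\[
p_{ij}(a)-p_{ij}(1-a)=(2a-1)(e_{ii}-e_{jj}),\qquad p_{ij}(a)+p_{ij}(1-a)-(e_{ii}+e_{jj})=2\sqrt{a(1-a)}(e_{ij}+e_{ji}),
\]
together with their skew analogues, combined with the substitution $\sqrt{a(1-a)}=b$ (solvable in $A_{+}$ for $0\leq b\leq 1/2$) and rescaling, show that $a(e_{ii}-e_{jj})$, $a(e_{ij}+e_{ji})$, and $i\,a(e_{ij}-e_{ji})$ lie in $L(M_n(A))$ for every $a\in A_{sa}$.

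\emph{Scalar-diagonal part.} For a contraction $\beta\in A$, consider the column isometry $V=\bigl(\beta,\,(1-\beta^{*}\beta)^{1/2},\,0,\ldots,0\bigr)^{T}$ in $M_{n\times 1}(A)$; then $V^{*}V=1_{A}$ and $p_\beta:=VV^{*}$ is a rank-one projection in $M_n(A)$ whose partial trace is $\beta\beta^{*}+(1-\beta^{*}\beta)=1+[\beta,\beta^{*}]$. Let $E\colon M_n(A)\to A\otimes 1_{M_n}$ be the trace-preserving conditional expectation $E(X)=\tfrac{1}{n}\sum_{i}X_{ii}\otimes 1_{M_n}$. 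Then $E(p_\beta-e_{11})=\tfrac{1}{n}[\beta,\beta^{*}]\otimes 1_{M_n}$, while the non-scalar components of $p_\beta-e_{11}$ (traceless-diagonal and off-diagonal) already lie in $L(M_n(A))$ by the previous step. Since $p_\beta-e_{11}$ is itself a linear combination of two projections, subtraction forces $[\beta,\beta^{*}]\otimes 1_{M_n}\in L(M_n(A))$ for every contraction $\beta\in A$, and by scaling for arbitrary $\beta\in A$. The unique-trace hypothesis now enters through the Cuntz-Pedersen commutator theorem, which identifies $A_{sa}\cap\ker\tau$ with the closed real-linear span of $\{xx^{*}-x^{*}x:x\in A\}$; consequently $(A_{sa}\cap\ker\tau)\otimes 1_{M_n}\subset L(M_n(A))$, and adjoining $\CC\cdot 1_{M_n(A)}$ gives $A_{sa}\otimes 1_{M_n}\subset L(M_n(A))$.

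\emph{Assembly and obstacle.} Any self-adjoint $X\in M_n(A)$ splits as $X=E(X)+(D-E(X))+(X-D)$ with $D$ its diagonal part. The three summands lie, respectively, in $A_{sa}\otimes 1_{M_n}$, in the span of $\{c(e_{ii}-e_{jj}):c\in A_{sa}\}$, and in the off-diagonal span, all of which are contained in $L(M_n(A))$ by the previous steps; hence $L(M_n(A))=M_n(A)$. The main obstacle is the scalar-diagonal step: the delicate point is not the projection construction itself but recognising that the partial trace $1+[\beta,\beta^{*}]$ of the column-isometry projection $p_\beta$ populates the coset $1+(A_{sa}\cap\ker\tau)$ whose commutator part is densely spanned (by Cuntz-Pedersen) precisely because $A$ has a unique trace; without this hypothesis the link between projections in $M_n(A)$ and scalar diagonals collapses.
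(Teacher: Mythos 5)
Your proof is correct, but it takes a genuinely different and much longer route than the paper, which disposes of the lemma in two lines: $M_n(A)$ is simple, has a unique tracial state, and contains a non-trivial projection (e.g.\ $e_{11}$), so Pedersen's theorem \cite[Corollary~5]{GP} --- a simple unital $C^*$-algebra with a non-trivial projection and at most one tracial state has dense linear span of projections --- applies directly. What you have done is essentially reprove the matrix-amplification case of that theorem from scratch, and every step checks out: the matrices $p_{ij}(a)$ and their skew variants are genuine projections (their entries commute, so the verification is the scalar $2\times 2$ computation), the substitution $a=\tfrac{1}{2}\bigl(1-\sqrt{1-4b^2}\bigr)$ does solve $\sqrt{a(1-a)}=b$ for positive $b$ with $\|b\|\le 1/2$, the column isometry $V$ gives $E(VV^*-e_{11})=\tfrac{1}{n}[\beta,\beta^*]\otimes 1$ with the remaining components already handled, and the Cuntz--Pedersen theorem does identify the closed real span of $\{xx^*-x^*x\}$ with $A_{sa}\cap\ker\tau$ when $\tau$ is the unique tracial state. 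As for what each approach buys: your argument never uses simplicity or projectionlessness of $A$ --- only the unique trace and $n\ge 2$ --- so it actually proves a slightly more general statement and makes visible the mechanism (commutators plus a one-dimensional trace obstruction) that the citation hides; the paper's route is shorter and covers arbitrary simple unital $C^*$-algebras with a non-trivial projection and at most one trace, not just matrix amplifications. The hypotheses ``projectionless'' and ``simple'' in the statement are there for the surrounding discussion (corners and the Jiang--Su example), not because your proof needs them.
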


\begin{proof}
Note that $M_n(A)$ has also a unique tracial state.

Since $A$ is unital, $M_n(A)$ has a non-trivial projection. 
Then by \cite[Corollary 5]{GP} $M_n(A)$ has the LP property.
\end{proof}

\vskip 3mm

\begin{rem}
Let $A$ be the Jiang-Su algebra. Then we know that $RR(A) = 1$ (\cite{BP}). 
Since $M_n(A)$ is an AH algebra without real rank zero, $RR(M_n(A)) = 1$.
But from Lemma~\ref{lem:LP} $M_n(A)$ has the LP property.
\end{rem}

\vskip 3mm

Using this observation we can construct a $C^*$-algebra with the LP property 
such that the fixed point algebra does not have the LP property.

\vskip 3mm

\begin{example}\label{exa:fixed point}
An simple unital AI algebra $A$ in \cite[Example~9]{Ell2}, which comes from Thomsen's construction,  has two extremal tracial states, so
by \cite[Theorem~4.4]{Thom} $A$ does not have the LP property. There is a symmetry $\alpha$ on $A$ 
constructed by Elliott such that $A \rtimes_\alpha \ZZ/2\ZZ$ is a UHF algebra. Since the fixed point algebra 
$(A \rtimes_\alpha \ZZ/2\ZZ)^\beta = A$, where $\beta$ is the dual action of $\alpha$. This shows that there is a simple unital 
$C^*$-algebra $B$ with the LP property such that the fixed point algebra $B^\beta$ does not have the LP property.
\end{example}


\subsection{$C^*$-index Theory}


According to Example~\ref{exa:fixed point} there is a faithful conditional expectation $E\colon B \rightarrow B^\beta$.
We extend this observation to an inclusion of unital $C^*$-algebras with a finite Watatani index as follows.

In this section we recall the 
$C^*$-basic construction defined by Watatani.

\vskip 3mm

\begin{defn}
Let $A \supset P$ be an  inclusion of unital $C^*$-algebras with a conditional expectation $E$ from $A$ onto $P$.
\begin{enumerate}
 \item A {\it quasi-basis} for $E$ is a finite set $\{(u_i, v_i)\}_{i=1}^n \subset A \times A$ such that 
 for every $a \in A$, 
 $$
 a = \sum_{i=1}^nu_iE\left(v_i a\right)= \sum_{i=1}^n E\left(a u_i\right)v_i.
 $$
 \item When $\{(u_i, v_i)\}_{i=1}^n$ is a quasi-basis for $E$, we define $\mathrm{Index} E$ by 
 $$
 \mathrm{Index} E = \sum_{i=1}^n u_iv_i.
 $$
 When there is no quasi-basis, we write $\mathrm{Index} E = \infty$. $\mathrm{Index} E$ is called the 
 Watatani index of $E$.  
\end{enumerate}
\end{defn}

\begin{rem}\label{rmk:quasi} We give several remarks about the above definitions.
\begin{enumerate}
 \item $\mathrm{Index} E$ does not depend on the choice of the quasi-basis in the above formula, 
 and it is a central element of $A$ 
 $($\cite[Proposition 1.2.8]{Watatani:index}$)$.
 \item Once we know that there exists a quasi-basis, we can choose one of the form 
 $\{(w_i, w_i^*)\}_{i=1}^m$, which shows that $\mathrm{Index} E$ is a positive element 
 $($\cite[Lemma 2.1.6]{Watatani:index}$)$.
 \item By the above statements, if $A$ is a simple $C^*$-algebra, then $\mathrm{Index} E$ is a 
 positive scalar.
\item If $\mathrm{Index} E < \infty$, then $E$ is faithful, i.e., $E(x^*x) = 0$ implies $x=0$ for $x \in A$.
\end{enumerate}
\end{rem}


\vskip 3mm

Next we recall the 
$C^*$-basic construction defined by Watatani.

\vskip 3mm

Let $E\colon A\to P$ be a faithful conditional expectation.
Then $A_{P}(=A)$ is
 a pre-Hilbert module over $P$ with a $P$-valued inner
product $$\langle x,y\rangle_P =E(x^{*}y), \ \ x, y \in A_{P}.$$
We denote by ${\mathcal E}_E$ and $\eta_E$ the Hilbert $P$-module completion of $A$ 
by the norm $\Vert x \Vert_P = \Vert \langle x, x \rangle_P\Vert^{\frac{1}{2}}$ for $x$ in $A$
and the natural inclusion map 
from $A$ into ${\mathcal E}_E$.  
Then ${\mathcal E}_E$ is a Hilbert $C^{*}$-module over $P$.
Since $E$ is faithful, the inclusion map $\eta_E$ from  $A$ to ${\mathcal E}_E$ is injective.
Let $L_{P}({\mathcal E}_E)$ be the set of all (right) $P$-module homomorphisms
$T\colon {\mathcal E}_E \to {\mathcal E}_E$ with an adjoint right $P$-module homomorphism
$T^{*}\colon {\mathcal E}_E \to {\mathcal E}_E$ such that $$\langle T\xi,\zeta
\rangle =
\langle \xi,T^{*}\zeta \rangle \ \ \ \xi, \zeta \in {\mathcal E}_E.$$
Then $L_{P}({\mathcal E}_E)$ is a $C^{*}$-algebra with the operator norm
$\|T\|=\sup\{\|T\xi \|:\|\xi \|=1\}.$ There is an injective
$*$-homomorphism $\lambda \colon A\to L_{P}({\mathcal E}_E)$ defined by
$$
\lambda(a)\eta_E(x)=\eta_E(ax)
$$
for $x\in A_{P}$ and  $a\in A$, so that $A$ can
be viewed as a
$C^{*}$-subalgebra of $L_{P}({\mathcal E}_E)$.
Note that the map $e_{P}\colon A_{P}\to A_{P}$
defined by 
$$
e_{P}\eta_E(x)=\eta_E(E(x)),\ \ x\in
A_{P}
$$
 is
bounded and thus it can be extended to a bounded linear operator, denoted
by $e_{P}$ again, on ${\mathcal E}_E$.
Then $e_{P}\in L_{P}({{\mathcal E}_E})$ and $e_{P}=e_{P}^{2}=e_{P}^{*}$, that
is, $e_{P}$ is a projection in $L_{P}({\mathcal E}_E)$.
A projection $e_P$ is called the {\em Jones projection} of $E$.

The {\sl (reduced) $C^{*}$-basic construction} is a $C^{*}$-subalgebra of
$L_{P}({\mathcal E}_E)$, defined as
$$
C^{*}_r\langle A, e_{P}\rangle = \overline{ span \{\lambda (x)e_{P} \lambda (y) \in
L_{P}({{\mathcal E}_E}): x, \ y \in A \ \} }^{\|\cdot \|} 
$$

\begin{rem}\label{rmk:b-const}
Watatani proved the following in \cite{Watatani:index}:
\begin{enumerate}
 \item $\mathrm{Index} E$ is finite if and only if $C^{*}_r\langle A, e_{P}\rangle$ has the identity 
 (equivalently $C^{*}_r\langle A, e_{P}\rangle = L_{P}({\mathcal E}_E)$) and there exists a constant 
 $c>0$ such that $E(x^*x) \geq cx^*x$ for $x \in A$, i.e., $\Vert x \Vert_P^2 \geq c\Vert x \Vert^2 $ 
 for $x$ in $A$ by \cite[Proposition 2.1.5]{Watatani:index}.
 Since $\Vert x \Vert \geq \Vert x \Vert_P$ for  $x$ in $A$, if $\mathrm{Index} E$ is finite, then ${\mathcal E}_E = A$.
 \item If $\mathrm{Index} E$ is finite, then each element $z$ in $C^{*}_r\langle A, e_{P}\rangle$ has a form 
 $$
 z = \sum_{i=1}^n \lambda(x_i) e_P \lambda(y_i)
 $$
 for some $x_i$ and $y_i $ in $A$.
 \item Let $C^{*}_{\max}\langle A, e_{P}\rangle$ be the unreduced $C^*$-basic construction defined in 
 Definition 2.2.5 of \cite{Watatani:index}, which has the certain universality (cf.(5)).
 If $\mathrm{Index} E$ is finite, then there exists an isomorphism from 
 $C^{*}_r\langle A, e_{P}\rangle$ onto $C^{*}_{\max}\langle A, e_{P}\rangle$ (\cite[Proposition 2.2.9]{Watatani:index}).
 Therefore, we  can identify $C^{*}_r\langle A, e_{P}\rangle$ with $C^{*}_{\max}\langle A, e_{P}\rangle$. 
 So we call $C^{*}_r\langle A, e_{P}\rangle$ the $C^*$-{\it basic construction} and denote it by $C^{*}\langle A, e_{P}\rangle$. 
 Moreover, 
 we identify $\lambda(A)$ with $A$ in $C^*\langle A, e_p\rangle (= C^{*}_r\langle A, e_{P}\rangle)$, 
and we define it as 
$$
C^*\langle A, e_p\rangle = \{ \sum_{i=1}^n x_i e_P y_i : x_i, y_i \in A, n \in \NN\}.
$$
\item The $C^*$-basic construction 
$C^{*}\langle A, e_p\rangle$ is isomorphic to  $qM_n(P)q$ 
for some $n \in \NN$ and projection $q \in M_n(P)$ 
$($\cite[Lemma~3.3.4]{Watatani:index}$)$.
If $\mathrm{Index} E$ is finite, then $\mathrm{Index} E$ is a central  invertible  element of $A$ and 
there is the dual conditional expectation $\hat{E}$ from $C^{*}\langle A, e_{P}\rangle$ onto $A$ such that 
$$
   \hat{E}(x e_P y) = (\mathrm{Index} E)^{-1}xy \quad \text{for} \ x, y \in A
 $$    
 by \cite[Proposition 2.3.2]{Watatani:index}. Moreover, $\hat{E}$ has a finite index and faithfulness.
 If $A$ is simple unital $C^*$-algebra, $\mathrm{Index} E \in A$ by Remark~\ref{rmk:quasi}(4). 
 Hence $\mathrm{Index} E = \mathrm{Index} \hat{E}$ by \cite[Proposition~2.3.4]{Watatani:index}.
\item Suppose that $\mathrm{Index} E$ is finite and $A$ acts on a Hilbert space $\mathcal{H}$ faithfully and $e$ is a projection on 
 $\mathcal{H}$ such that $eae =E(a)e$ for $a \in A$. If a map $P \ni x \mapsto xe \in B$ $($$\mathcal{H}$$)$ 
 is injective, then there exists an isomorphism $\pi$ from the norm closure  of a linear span of $AeA$ to  
 $C^{*}\langle A, e_{P}\rangle$ such that $\pi(e) = e_P$ and $\pi(a) = a$ for $a \in A$ \cite[Proposition 2.2.11]{Watatani:index}.
\end{enumerate}
\end{rem}

\vskip 3mm


\subsection{Rokhlin property for an inclusion of unital C*-algebras}

For a $C^*$-algebra $A$, we set 

\begin{eqnarray*}
c_0(A) &=& \{(a_n) \in l^\infty(\NN, A): \lim\limits_{n \to \infty} \Vert a_n \Vert= 0\}  \\
 A^\infty &=&l^\infty(\NN, A)/c_0(A).  
\end{eqnarray*}

We identify $A$ with the $C^*$-subalgebra of $A^\infty$ consisting of the equivalence classes of constant sequences and 
set
$$
A_\infty = A^\infty \cap A'.
$$
For an automorphism $\alpha \in {\mathrm{ Aut}}(A)$, we denote by $\alpha^\infty$ and $\alpha_\infty$ the automorphisms of 
$A^\infty$ and $A_\infty$ induced by $\alpha$, respectively.

Izumi defined the Rokhlin property for a finite group action in \cite[Definition 3.1]{Iz} as follows:

\begin{defn}\label{def:group action}
Let $\alpha$ be an action of a finite group $G$ on a unital $C^*$-algebra $A$. 
$\alpha$ is said to have the {\it Rokhlin property} if there exists a partition of unity 
$\{e_g\}_{g \in G} \subset A_\infty$ consisting of projections satisfying 
$$
(\alpha_g)_\infty(e_h) = e_{gh} \quad \text{for} \  g, h \in G.
$$
We call $\{e_g\}_{g\in G}$ Rokhlin projections. 
\end{defn}  

\vskip 3mm

Let $A \supset P$ be an inclusion of unital $C^*$-algebras.
For a conditional expectation $E$ from  $A$ onto $P$, we denote by $E^\infty$ the natural
conditional expectation from $A^\infty $ onto $ P^\infty$ induced by $E$.
If $E$ has a finite index with a quasi-basis $\{(u_i, v_i)\}_{i=1}^n$, then 
$E^\infty$ also has a finite index with a quasi-basis $\{(u_i, v_i)\}_{i=1}^n$
and $\mathrm{Index} (E^\infty) = \mathrm{Index} E$.

\vskip 3mm

Motivated by Definition~\ref{def:group action},
Kodaka, Osaka, and Teruya introduced the Rokhlin property for an inclusion of 
unital $C^*$-algebras with a finite index \cite{KOT}. 

\vskip 3mm

\begin{defn}\label{Rokhlin}
A conditional expectation $E$ of a unital  $C^*$-algebra $A$ with a finite index is said to have the {\it Rokhlin property} 
if there exists a  projection $e \in A_\infty$ satisfying 
$$
E^\infty(e) = ({\mathrm{Index}}E)^{-1} \cdot 1
$$
and a map $A \ni x \mapsto xe$ is injective. We call $e$ a Rokhlin projection.
\end{defn}

\vskip 3mm

The following result states that 
the Rokhlin property of an action in the sense of Izumi implies that the canonical 
conditional expectation from a given simple $C^*$-algebra 
to its fixed point algebra has the Rokhlin property in the sense of Definition~\ref{Rokhlin}.

\vskip 1mm

\begin{prop}\label{prp:group}$($\cite{KOT}$)$
Let $\alpha$ be an action of a finite group $G$ on a unital $C^*$-algebra $A$ and 
$E$ be the canonical conditional expectation from $A$ onto the fixed point algebra $P = A^{\alpha}$ defined by 
$$
E(x) = \frac{1}{\#G}\sum_{g\in G}\alpha_g(x) \quad \text{for} \ x \in A, 
$$
where $\#G$ is the order of $G$.
Then $\alpha$ has the Rokhlin property if and only if there is a projection $e \in A_\infty$ such that 
$E^\infty (e) = \frac{1}{\#G}\cdot 1$, where $E^\infty$ is the conditional expectation from $A^\infty$ onto 
$P^\infty$ induced by $E$.
\end{prop}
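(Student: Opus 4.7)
The plan is to prove the equivalence by passing between the single Rokhlin projection $e$ of Definition~\ref{Rokhlin} and the full partition $\{e_g\}_{g\in G}$ of Definition~\ref{def:group action} via the group action itself. The forward direction collapses the partition to one of its components; the reverse direction spreads a single projection across the orbit.

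For $(\Rightarrow)$, assume $\{e_g\}_{g\in G}$ are Rokhlin projections for $\alpha$. I would simply set $e := e_1$, where $1$ denotes the identity element of $G$. Since $E^\infty(x) = \frac{1}{\#G}\sum_{g\in G}\alpha_g^\infty(x)$ and $(\alpha_g)_\infty(e_1) = e_g$, a direct computation gives
$$E^\infty(e) = \frac{1}{\#G}\sum_{g\in G}\alpha_g^\infty(e_1) = \frac{1}{\#G}\sum_{g\in G} e_g = \frac{1}{\#G}\cdot 1,$$
using that $\{e_g\}$ is a partition of unity.

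For $(\Leftarrow)$, given a projection $e \in A_\infty$ with $E^\infty(e)=\frac{1}{\#G}\cdot 1$, I would define $e_g := \alpha_g^\infty(e)$ for each $g \in G$ and check the four defining properties. (a) Each $e_g$ is a projection, since $\alpha_g^\infty$ is a $\ast$-automorphism. (b) Each $e_g$ lies in $A_\infty$, not merely $A^\infty$: for $a \in A$, we have $\alpha_{g^{-1}}(a)\in A$ and $e \in A'$, so
$$e_g a = \alpha_g^\infty(e\,\alpha_{g^{-1}}(a)) = \alpha_g^\infty(\alpha_{g^{-1}}(a)\,e) = a\,e_g.$$
(c) Summing, $\sum_{g\in G} e_g = \sum_{g\in G}\alpha_g^\infty(e) = \#G \cdot E^\infty(e) = 1$. (d) Equivariance is immediate: $(\alpha_g)_\infty(e_h) = \alpha_g^\infty\alpha_h^\infty(e) = \alpha_{gh}^\infty(e) = e_{gh}$.

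The one nonobvious point is mutual orthogonality of the $e_g$, which is needed to call $\{e_g\}$ a partition of unity. This follows from the general fact that projections $\{p_i\}$ in a $C^*$-algebra with $\sum_i p_i = 1$ are automatically pairwise orthogonal: from $p_j = p_j\cdot 1 = p_j^2 + \sum_{i\neq j} p_j p_i$ one gets $\sum_{i\neq j} p_j p_i = 0$, hence $\sum_{i\neq j} p_j p_i p_j = 0$; each summand is positive, so $p_j p_i p_j = 0$, and therefore $p_i p_j = 0$ for $i\neq j$. Applying this to $\{e_g\}$ finishes the proof. The main subtlety I anticipate is step (b) above — verifying that $\alpha_g^\infty(e)$ actually sits in the relative commutant $A_\infty$ rather than merely in $A^\infty$ — but this is handled by the short computation using $\alpha_g(A)=A$ and $e\in A'$.
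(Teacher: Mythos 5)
Your proof is correct. Note that the paper itself gives no proof of this proposition --- it is quoted from \cite{KOT} --- so there is nothing internal to compare against; your argument (taking $e=e_1$ in one direction, and orbiting a single projection via $e_g=\alpha_g^\infty(e)$ in the other, with the standard observation that projections summing to $1$ are automatically mutually orthogonal) is the natural one and matches what the cited reference does.
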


\vskip 3mm

The following is the key one in the next section.

\begin{prop}\label{prp:embedding}$($\cite{KOT}\cite[Lemma~2.5]{OT}$)$
Let $P \subset A$ be an inclusion of unital $C^*$-algebras and 
$E$ be a conditional expectation from $A$ onto $P$ with a finite index.
If $E$ has the Rokhlin property with a Rokhlin projection $e \in A_\infty$, then 
there is a unital linear map 
$\beta \colon A^\infty 
\rightarrow P^\infty$ such that 
for any $x \in A^\infty$ there exists 
the unique element $y$ of $P^\infty$ such that $xe = ye = \beta(x)e$ and 
$\beta(A' \cap A^\infty) \subset P' \cap P^\infty$. 
In particular, $\beta_{|_A}$ is a unital injective *-homomorphism and 
$\beta(x) = x $ for all  $x \in P$.
\end{prop}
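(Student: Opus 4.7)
The plan is to define $\beta\colon A^\infty \to P^\infty$ by the explicit formula $\beta(x) := (\mathrm{Index}\,E) \cdot E^\infty(xe)$. The image lies in $P^\infty$ since $\mathrm{Index}\,E \in Z(A)$ and $E^\infty$ takes values in $P^\infty$. Linearity, unitality ($\beta(1) = (\mathrm{Index}\,E) E^\infty(e) = 1$), and $\beta(p) = p$ for $p \in P$ (by left $P$-linearity of $E^\infty$) are immediate. Moreover this formula is forced: if $y \in P^\infty$ were to satisfy $ye = xe$, then $E^\infty(xe) = y\,E^\infty(e) = (\mathrm{Index}\,E)^{-1}y$, so $y = \beta(x)$. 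The uniqueness of such a $y$ then reduces to injectivity of $P^\infty \ni y \mapsto ye$: if $ye = 0$, then $y^*y \in P^\infty$ and, by left $P^\infty$-linearity of $E^\infty$,
$$(\mathrm{Index}\,E)^{-1}y^*y \;=\; y^*y\,E^\infty(e) \;=\; E^\infty(y^*ye) \;=\; E^\infty(y^*(ye)) \;=\; 0,$$
so $y^*y = 0$ and $y = 0$, using that $\mathrm{Index}\,E$ is positive and invertible.

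The crux, and the main obstacle, is the existence identity $\beta(x)e = xe$ for every $x \in A^\infty$. I plan to set $\xi := \beta(x)e - xe = (\beta(x) - x)e$ and prove $\xi = 0$ by showing $E^\infty(\xi^*\xi) = 0$ and invoking faithfulness of $E^\infty$ (Remark 3.2(4)). Expanding $\xi^*\xi = e(\beta(x)^* - x^*)(\beta(x) - x)e$ into four terms and applying $E^\infty$: on the pure $\beta$-term, the $P^\infty$-bilinearity of $E^\infty$ pulls $\beta(x)^*$ and $\beta(x)$ outside $E^\infty$ and leaves $\beta(x)^*E^\infty(e)\beta(x) = (\mathrm{Index}\,E)^{-1}\beta(x)^*\beta(x)$; on the cross terms, $e \in A'$ lets one move $e$ past elements of $A$ where possible, and the identity $E^\infty(xe) = (\mathrm{Index}\,E)^{-1}\beta(x)$ should collapse the four contributions to zero. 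The subtlety is that $e$ does not commute with $P^\infty$ in general, so I would first verify the identity on the subalgebra $A$, where $ex = xe$ provides the needed handles, and then extend to arbitrary $x \in A^\infty$ by a representing-sequence argument using continuity of $E^\infty$, of $\beta$, and of right multiplication by $e$.

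Once $\beta(x)e = xe$ is in place, the remaining conclusions follow cleanly by uniqueness. For $x, y \in A$, using that $y \in A$ commutes with $e$,
$$\beta(x)\beta(y)\,e \;=\; \beta(x)(ye) \;=\; \beta(x)ey \;=\; (xe)y \;=\; xye \;=\; \beta(xy)e,$$
so $\beta(xy) = \beta(x)\beta(y)$; similarly $\beta(x)^* = (\mathrm{Index}\,E)E^\infty(ex^*) = (\mathrm{Index}\,E)E^\infty(x^*e) = \beta(x^*)$, using $ex^* = x^*e$ for $x \in A$. Injectivity of $\beta|_A$ follows from injectivity of $a \mapsto ae$ together with $\beta(x)e = xe$. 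Finally, for $x \in A' \cap A^\infty$ and $p \in P$, the element $\beta(x)p - p\beta(x) \in P^\infty$ satisfies
$$(\beta(x)p - p\beta(x))e \;=\; (\beta(x)e)p - p(\beta(x)e) \;=\; (xe)p - p(xe) \;=\; xpe - pxe \;=\; (xp-px)e \;=\; 0,$$
using $pe = ep$ (since $p \in A$) and $xp = px$ (since $x \in A'$), whence uniqueness forces $\beta(x)p = p\beta(x)$, giving $\beta(A'\cap A^\infty) \subset P'\cap P^\infty$.
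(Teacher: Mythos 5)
The paper gives no proof of this proposition---it is quoted from \cite{KOT} and \cite[Lemma~2.5]{OT}---so the comparison below is with the argument of those references, whose key computation also appears inside the paper's proof of Proposition~\ref{prp:Rokhlin}. Your formula $\beta(x)=(\mathrm{Index} E)E^\infty(xe)$ is the right one, and your uniqueness argument (injectivity of $P^\infty\ni y\mapsto ye$ via $E^\infty(y^*ye)=(\mathrm{Index} E)^{-1}y^*y$), as well as the deductions of multiplicativity, $*$-preservation, injectivity on $A$ and $\beta(A'\cap A^\infty)\subset P'\cap P^\infty$ \emph{from} the identity $xe=\beta(x)e$, are all correct. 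The gap is at that identity itself, which you rightly call the crux. In $E^\infty(\xi^*\xi)$ with $\xi=(\beta(x)-x)e$, every one of the four terms has the form $E^\infty(e\,(\cdot)\,e)$ with the relevant factors in the \emph{middle}; $P^\infty$-bimodularity of $E^\infty$ only extracts elements of $P^\infty$ sitting at the outer ends, and $\beta(x)\in P^\infty$ does not commute with $e$ (the Rokhlin projection lies in $A'\cap A^\infty$, so it commutes with $A$ but not with $A^\infty\supset P^\infty$), so no term simplifies and the asserted reduction $E^\infty(e\beta(x)^*\beta(x)e)=\beta(x)^*E^\infty(e)\beta(x)$ is unjustified. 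Expanding $E^\infty(\xi\xi^*)$ instead, three terms do reduce to $(\mathrm{Index} E)^{-1}\beta(x)\beta(x)^*$, but the fourth, $E^\infty(xex^*)$, does not reduce even for $x\in A$, since $E^\infty$ is only $P^\infty$-bimodular, not $A$-bimodular; evaluating it is equivalent to the identity you are trying to prove. The fallback of proving the identity on $A$ and extending to $A^\infty$ ``by a representing-sequence argument'' also fails: $A$ is not norm-dense in $A^\infty$, and passing from the statement for each constant sequence to the statement for an arbitrary bounded sequence $(x_n)$ would require uniformity over the unit ball of $A$ that has not been established.

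The missing ingredient is the basic construction. Work in $C^*\langle A,e_P\rangle^\infty$. Exactly as in the paper's proof of Proposition~\ref{prp:Rokhlin}, one first shows $e=(\mathrm{Index} E)\,ee_Pe$, by observing that $(\mathrm{Index} E)ee_Pe$ is a subprojection of $e$ with $\hat{E}^\infty\bigl(e-(\mathrm{Index} E)ee_Pe\bigr)=0$ and invoking faithfulness of the dual expectation. Then for any $x\in A^\infty$,
$$
e_P(xe)=(\mathrm{Index} E)\,e_P\,xe\,e_P\,e=(\mathrm{Index} E)E^\infty(xe)\,e_Pe=\beta(x)e_Pe=e_P\bigl(\beta(x)e\bigr),
$$
using $e_Pze_P=E^\infty(z)e_P$ for $z\in A^\infty$ and the fact that $\beta(x)\in P^\infty$ commutes with $e_P$. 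Finally, left multiplication by $e_P$ is injective on $A^\infty$: if $e_Pz=0$ then $0=\hat{E}^\infty(z^*e_Pz)=(\mathrm{Index} E)^{-1}z^*z$, hence $z=0$. This yields $xe=\beta(x)e$ for every $x\in A^\infty$, after which the remainder of your argument goes through verbatim.
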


\vskip 3mm

The following is contained in \cite[Proposition~3.4]{KOT}. But 
we give it for self-contained.

\vskip 3mm

\begin{prop}\label{prp:Rokhlin}
Let $P \subset A$ be an inclusion of unital $C^*$algebras and 
$E$ be a conditional expectation from $A$ onto $P$ with a finite index.
Suppose that $A$ is simple.
Consider the basic construction
$$
P \subset A \subset C^*\langle A, e_P\rangle (:= B) \subset C^*\langle B, e_A\rangle (:= B_1).
$$

If $E \colon A \rightarrow P$ has the Rokhlin property with a Rokhlin projection $e \in A_\infty$, then 
the double dual conditional expectation $\hat{\hat{E}} (:= E_B) \colon C^*\langle B, e_A\rangle \rightarrow B$ has the 
Rokhlin property.
\end{prop}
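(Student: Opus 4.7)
The plan is to exhibit a projection $f \in B_1^\infty \cap B_1'$ with $E_B^\infty(f) = \lambda \cdot 1$, where $\lambda^{-1} := \mathrm{Index}(E)$; the injectivity of $B \ni x \mapsto xf$ will then follow automatically. Indeed, if $xf = 0$ then $fx^*xf = (xf)^*(xf) = 0$, and since $f \in B_1'$ commutes with $x^*x \in B \subset B_1$ and satisfies $f^2 = f$, this reduces to $x^*xf = 0$; applying $E_B^\infty$ and using its $B$-bimodularity gives $0 = x^*x \cdot E_B^\infty(f) = \lambda\,x^*x$, so $x = 0$. Since $A$ is simple, Remark~\ref{rmk:quasi}(3) makes $\lambda^{-1}$ a positive scalar, and by iterated application of Remark~\ref{rmk:b-const}(4) along the tower $P \subset A \subset B$ one also has $\mathrm{Index}(\hat E) = \mathrm{Index}(E_B) = \lambda^{-1}$ together with the Watatani identity $\hat{\hat E}(e_A) = \lambda \cdot 1$.

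The toolkit is the Jones-tower algebra: $e_P x e_P = E^\infty(x) e_P$ for $x \in A^\infty$ (so $e_P e e_P = \lambda e_P$ by the Rokhlin condition $E^\infty(e) = \lambda \cdot 1$), $e_A y e_A = \hat E^\infty(y) e_A$ for $y \in B^\infty$ (so $e_A e_P e_A = \lambda e_A$), and the special case $e_A a e_A = \hat E(a) e_A = a e_A$ for $a \in A$. The last identity, combined with its adjoint, shows $a e_A = e_A a$, hence $e_A$ commutes with every element of $A^\infty$ --- in particular with $e$. Therefore $e e_A = e_A e$ is already a projection in $B_1^\infty$ commuting with $A$ and with $e_A$; the only remaining question is how to modify it so that it also commutes with $e_P$.

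The main obstacle is exactly this last commutation, and it is where the Rokhlin hypothesis must be used decisively. The naive candidate $f = e e_A$ fails already in the finite-group case: for $e_P = \tfrac{1}{\#G}\sum_{g \in G} u_g$, the Rokhlin relation $u_g e u_g^{-1} = (\alpha_g)_\infty(e) \ne e$ from Definition~\ref{def:group action} gives $[e_P, e] \ne 0$. To cancel this commutator one dresses $e e_A$ with further factors of $e_P$ and $e_A$, producing --- as in the construction of \cite[Proposition~3.4]{KOT} --- an explicit polynomial $f$ in $e, e_P, e_A$ whose commutator with $e_P$ collapses through the Temperley--Lieb identities $e_P e_A e_P = \lambda e_P$, $e_A e_P e_A = \lambda e_A$, and $e_P e e_P = \lambda e_P$. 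Once centrality of $f$ in $B_1$ is verified, the projection identities $f = f^* = f^2$ are straightforward expansions using the same algebra, the value $E_B^\infty(f) = \lambda \cdot 1$ is computed by linearity from $E_B^\infty|_{B^\infty} = \mathrm{id}$ on the $B^\infty$-part of $f$ together with $\hat{\hat E}(e_A) = \lambda \cdot 1$ on the $e_A$-part, and the injectivity argument of the first paragraph closes the proof.
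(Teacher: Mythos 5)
Your framework is sound up to a point: the reduction of injectivity of $x \mapsto xf$ to centrality plus $E_B^\infty(f)=\lambda\cdot 1$ is correct, as is the observation that $e_A$ commutes with $A^\infty$, so that $ee_A$ is a projection commuting with $A^\infty$ and with $e_A$, the remaining obstruction being commutation with $e_P$. But the step that resolves that obstruction is exactly the step you do not carry out, and the form you predict for the answer is not the right one. No polynomial in $e, e_P, e_A$ alone can work: any word genuinely involving $e_P$ fails to commute with $A$ (since $e_Pae_P = E(a)e_P$), while the reductions $e_Pee_P=\lambda e_P$, $ee_Pe=\lambda e$, $e_Ae_Pe_A=\lambda e_A$, $e_Pe_Ae_P=\lambda e_P$ only ever return you to words in $e,e_A$, which do not commute with $e_P$ unless they are scalars. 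The actual construction, both in the paper and in \cite{KOT}, is a \emph{quasi-basis average}: one takes a quasi-basis $\{(w_i,w_i^*)\}\subset B\times B$ for $\hat E$ and sets $g=\sum_i w_i\, ee_A\, w_i^*$. Centrality in $B_1$ then comes not from Temperley--Lieb identities but from the reproducing formula $z=\sum_j w_j\hat E(w_j^*z)$ (giving $gz=zg$ for $z\in B$) together with $ge_A=e_Ag=ee_A$; the elements $w_i\in B$ are essential and cannot be eliminated in favour of the three projections.

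There is a second gap in your evaluation of the expectation. Bimodularity gives $\hat{\hat E}{}^\infty(g)=\lambda\sum_i w_iew_i^*$, and one still has to show $\sum_i w_iew_i^*=1$. The paper does this by first proving $e=(\mathrm{Index}\,E)\,ee_Pe$ and $eze=\hat E(z)e$ for all $z\in B$, and then invoking Remark~\ref{rmk:b-const}(5) to obtain an isomorphism $C^*\langle B,e_A\rangle\cong C^*\langle B,e\rangle$ fixing $B$ and sending $e_A$ to $e$, under which $\sum_i w_iew_i^*$ is the image of the identity $\sum_i w_ie_Aw_i^*$ of $B_1$. Your ``by linearity'' computation does not supply this. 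As it stands, the proposal is missing both the construction of the Rokhlin projection and the mechanism for computing its expectation, which together constitute the substance of the proof.
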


\vskip 3mm

\begin{proof}
Note that from Remark~\ref{rmk:b-const}(4) and \cite[Corollary~3.8]{KOT} $C^*$-algebras 
$C^*\langle A, e_P\rangle$ and $C^*\langle B, e_A\rangle$  are simple.

Since $e_Pee_P = E^\infty(e)e_P = ({\mathrm{Index} E})^{-1}e_p$, 
$(\mathrm{Index} E)ee_pe \leq e$ and 
$$
\hat{E}^\infty(e - (\mathrm{Index} E)ee_pe) = e - (\mathrm{Index} E)e\hat{E}^\infty(e_P)e = e - e = 0,
$$ we have 
$e = (\mathrm{Index} E)ee_Pe$.
Then, for any $x, y \in A$
\begin{align*}
e(xe_Py)e &= exee_Peye\\
&= ({\mathrm{Index} E})^{-1} exye\\
&= \hat{E}(xe_Py)e
\end{align*}
Hence, 
from Remark~\ref{rmk:b-const}(3) we have $eze = \hat{E}(z)e$ for any $z \in C^*\langle A, e_P\rangle$.

Let $\{(w_i, w_i^*)\} \subset B \times B$ be a quasi-basis for $\hat{E} (= E_A)$ and 
$e_A$ be the Jones projection of $\hat{E}$. 
Set $g = \sum_iw_iee_Aw_i^* \in B_1^\infty$. 
Then $g$ is a projection and $g \in B_1'$. Indeed, since
\begin{align*}
g^2 &= \sum_{i,j}w_iee_Aw_i^*w_jee_Aw_j^*\\
&= \sum_{i,j}w_iee_A\hat{E}(w_i^*w_j)w_j^*\\
&= \sum_iw_iee_A(\sum_j\hat{E}(w_i^*w_j)w_j^*)\\
&= \sum_iw_iee_Aw_i^* =g,
\end{align*}
$g$ is a projection. 
\begin{align*}
ge_A &= \sum_iw_iee_Aw_i^*e_A\\
&= \sum_iw_i\hat{E}(w_i^*)ee_A\\
&= ee_A
\end{align*} and 
\begin{align*}
e_Ag &= e_A\sum_iw_iee_Aw_i^*\\
&= \sum_i\hat{E}(w_i)e_Aew_i^*\\
&= e_Ae\sum_i\hat{E}(w_i)w_i^*\\
&= ee_A = ge_A.
\end{align*}

Moreover, for any $z \in C^*\langle A, e_P\rangle$ we have
\begin{align*}
gz &= \sum_iw_iee_Aw_i^*z\\
&= \sum_iw_iee_A(\sum_j\hat{E}(w_i^*zw_j)w_j^*)\\
&= \sum_iw_i\sum_j\hat{E}(w_i^*xzw_j)ee_Aw_j^*\\
&= \sum_j(\sum_iw_i\hat{E}(w_i^*zw_j)ee_Aw_j^*\\
&= \sum_jzw_jee_Aw_j^*\\
&= z\sum_jw_jee_Aw_j^*\\
&= zg
\end{align*}
Since $B_1 = C^*\langle C^*\langle A, e_P\rangle, e_A\rangle$,
$g \in B_1' \cap B_1^\infty$.

To prove that the double dual conditional expectation $\hat{\hat{E}}$ has the 
Rokhlin property, we will show that $g$ is the Rokhlin projection of $\hat{\hat{E}}$.
Since $eze = \hat{E}(z)e$ for any $z \in C^*\langle A, w_P\rangle$,
by Remark~\ref{rmk:b-const}(5), there exists an isomorphism 
$\pi \colon C^*\langle C^*\langle A, e_P\rangle, e_A\rangle\rightarrow C^*\langle C^*\langle A, e_P\rangle, e\rangle$
such that $\pi(e_A) = e$ and $\pi(z) = z$ for $z \in C^*\langle A, e_P\rangle$. Then 

\begin{align*}
\hat{\hat{E}}^{\infty}(g) &= \sum_iw_ie\hat{\hat{E}}^\infty(e_A)w_i^*\\
&= \sum_iw_i({\mathrm{Index} E})^{-1}ew_i^*\\
&= ({\mathrm{Index} E})^{-1}\sum_iw_i\pi(e_A)w_i^*\\
&= ({\mathrm{Index} E})^{-1}\pi(\sum_iw_ie_Aw_i^*)\\
&= ({\mathrm{Index} E})^{-1}1\\
&= ({\mathrm{Index} \hat{\hat{E}}})^{-1}1,
\end{align*}
hence $\hat{\hat{E}}$ has the Rokhlin property.
\end{proof}

\subsection{Main results}

\begin{thrm}\label{thm:index LP}
Let $1 \in P \subset A$ be an inclusion of unital $C^*$-algebras with a finite Watatani index 
and $E \colon A \rightarrow P$ be a faithful conditional expectation.
Suppose that $A$ has the LP property and $E$ has the Rokhlin property.
Then $P$ has the LP property.
\end{thrm}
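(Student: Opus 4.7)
The plan is to exploit the embedding $\beta\colon A^\infty\to P^\infty$ furnished by Proposition~\ref{prp:embedding}, whose restriction to $A$ is a unital injective $\ast$-homomorphism fixing $P$ pointwise. I would fix $x\in P$ and $\varepsilon>0$; since $A$ has the LP property and $P\subset A$, I can pick $y=\sum_{i=1}^{k}\lambda_i q_i\in L(A)$ with projections $q_i\in A$ and scalars $\lambda_i\in\CC$ such that $\|x-y\|<\varepsilon$. The goal is then to convert this into an approximation of $x$ by a linear combination of projections that genuinely live in $P$.

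Apply $\beta$: because $\beta|_A$ is a $\ast$-homomorphism, each $\beta(q_i)$ is a projection in $P^\infty$; because $x\in P$, $\beta(x)=x$; and because an injective $\ast$-homomorphism between $C^*$-algebras is isometric,
$$\Bigl\|\,x-\sum_{i=1}^{k}\lambda_i\,\beta(q_i)\Bigr\|_{P^\infty}=\|\beta(x-y)\|_{P^\infty}=\|x-y\|_A<\varepsilon.$$
This yields an approximation of $x$ by a linear combination of projections in $P^\infty$, but I need one inside $P$. For that, I invoke the standard lifting of projections in sequence algebras: any projection $e\in P^\infty$ is represented by a sequence $(p_n)$ of projections in $P$ (take any self-adjoint bounded lift $(a_n)$ of $e$; since $a_n^2-a_n\to 0$, the spectrum of $a_n$ avoids $1/2$ for all large $n$, and $p_n:=\chi_{[1/2,1]}(a_n)\in P$ is then a projection representing $e$). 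Writing $\beta(q_i)=[(p_n^{(i)})]$ and setting $b_n=\sum_{i=1}^{k}\lambda_i p_n^{(i)}\in L(P)$, we have $[(b_n)]=\sum_i\lambda_i\beta(q_i)$ in $P^\infty$, so
$$\limsup_{n\to\infty}\|x-b_n\|_P=\Bigl\|x-\sum_{i=1}^k\lambda_i\beta(q_i)\Bigr\|_{P^\infty}<\varepsilon;$$
hence $\|x-b_n\|_P<\varepsilon$ for some $n$, placing $x$ in the closure of $L(P)$.

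The substantive input is Proposition~\ref{prp:embedding} itself, which is where the Rokhlin property of $E$ and the finiteness of $\mathrm{Index}\,E$ are used. Once that embedding is in hand, no serious obstacle remains: the $\ast$-homomorphism property of $\beta|_A$ transports the $L(A)$-approximation into an approximation by projections inside the sequence algebra $P^\infty$, and continuous functional calculus pushes it down to $P$. The only mild subtlety is that $\beta(q_i)$ is \emph{a priori} only a projection in $P^\infty$ rather than in $P$, which is precisely why the lifting step is needed; no additional hypothesis on $P$ (simplicity, real rank zero, dimension growth, etc.) is required for the argument.
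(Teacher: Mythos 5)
Your argument is correct and is essentially the paper's own proof: approximate $x$ in $L(A)$, push the approximation into $P^\infty$ via the injective $\ast$-homomorphism $\beta$ from Proposition~\ref{prp:embedding} (using $\beta|_P=\mathrm{id}$), and lift the resulting projections from $P^\infty$ back to $P$. Your write-up is in fact slightly more careful than the paper's, since you spell out the functional-calculus lifting of projections and the $\limsup$ step that produces an honest approximant in $L(P)$.
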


\begin{proof}
Let $x \in P$ and $\varepsilon > 0$. 
Since $A$ has the LP property, $x$ can be approximated by a line sum of projection $\sum_{i=1}^n\lambda_ip_i$
\ $(p_i \in A)$ such that $\|x - \sum_{i=1}^n\lambda_ip_i\| < \varepsilon$.

Since $\beta\colon A^\infty \rightarrow P^\infty$ is an injective *-homomorphism by Proposition~\ref{prp:embedding}, we have 
$$
\|\beta(x - \sum_{i=1}^n\lambda_ip)\| = \|\beta(x) - \sum_{i=1}^n\lambda_i\beta(p_i)\| < \varepsilon.
$$
Since ${\beta_{|P}} = id$, we have $\|x - \sum_{i=1}^n\lambda_i\beta(p_i)\| < \varepsilon$.
Each projection in $P^\infty$ can be lifted to a projection in $\ell^\infty(\NN, P)$, so 
we can find a set of projections $\{q_i\}_{i=1}^n \subset P$ such that
$$
\|x - \sum_{i=1}^n\lambda_iq_i\| < \varepsilon.
$$
Therefore, $P$ has the LP property.
\end{proof}

\vskip 3mm

\begin{thrm}\label{thm:crossed product LP}
Let $\alpha$ be an action of a finite group $G$ on a simple unital $C^*$-algebra $A$ and 
$E$ be the canonical conditional expectation from $A$ onto the fixed point algebra $P = A^{\alpha}$ defined by 
$$
E(x) = \frac{1}{\#G}\sum_{g\in G}\alpha_g(x) \quad \text{for} \ x \in A, 
$$
where $\#G$ is the order of $G$.
Suppose that $\alpha$ has the Rokhlin property. 
We have, then, if $A$ has the LP property, the fixed point algebra and the crossed product $A \rtimes_\alpha G$ 
have the LP property.
\end{thrm}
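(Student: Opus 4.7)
The plan is to derive both LP assertions from two successive applications of Theorem~\ref{thm:index LP}: a direct one for the fixed point algebra, and one for the crossed product via the next step of the Jones tower delivered by Proposition~\ref{prp:Rokhlin}.

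For $P := A^G$, the canonical conditional expectation $E(x) = \frac{1}{\#G}\sum_{g\in G}\alpha_g(x)$ has finite Watatani index $\#G$, and Proposition~\ref{prp:group} converts the Rokhlin property of $\alpha$ into the Rokhlin property of $E$ in the sense of Definition~\ref{Rokhlin}. Since $A$ has the LP property, Theorem~\ref{thm:index LP} immediately gives the LP property for $P$.

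For $B := A \rtimes_\alpha G$, I would use Watatani's identification $B \cong C^*\langle A, e_P\rangle$ to place $B$ in the Jones tower $P \subset A \subset B \subset B_1 := C^*\langle B, e_A \rangle$. Next, I would observe that the set $\{(u_g, u_g^*)\}_{g\in G}$, where $\{u_g\}$ are the canonical unitaries of the crossed product, is a quasi-basis for the dual conditional expectation $\hat{E}\colon B \to A$, and that $\hat{E}(u_g^* u_h) = \delta_{g,h}\cdot 1$. Feeding this quasi-basis into Remark~\ref{rmk:b-const}(4) produces the projection $q = I_{\#G} \in M_{\#G}(A)$, so $B_1 \cong M_{\#G}(A)$. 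By the corollary following Theorem~\ref{thrm1}, $B_1$ has the LP property since $A$ does. Proposition~\ref{prp:Rokhlin} then supplies the Rokhlin property for the double dual expectation $\hat{\hat{E}}\colon B_1 \to B$, and a second application of Theorem~\ref{thm:index LP}, now to the inclusion $B \subset B_1$, yields the LP property for $B = A \rtimes_\alpha G$.

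The step I expect to require the most care is the identification $B_1 \cong M_{\#G}(A)$: one must verify that the projection extracted from Remark~\ref{rmk:b-const}(4) really is the identity of $M_{\#G}(A)$ rather than a proper corner. This reduces to the clean orthonormality relation $\hat{E}(u_g^* u_h) = \delta_{g,h}\cdot 1$, which follows directly from the definition of $\hat{E}$ on the group unitaries; an alternative is to invoke (generalised) Takai/coaction duality to see that the second basic construction is $A \otimes M_{\#G}$. The remaining bookkeeping—that $\hat{\hat{E}}$ has the Rokhlin property and finite index—is packaged into Proposition~\ref{prp:Rokhlin} and Remark~\ref{rmk:b-const}(5), so the whole argument collapses to two invocations of Theorem~\ref{thm:index LP} linked by this matrix identification.
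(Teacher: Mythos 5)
Your proposal is correct and follows essentially the same route as the paper: one application of Theorem~\ref{thm:index LP} to $E\colon A\to A^G$ (via Proposition~\ref{prp:group}), and a second application to $\hat{\hat{E}}\colon B_1\to B$ after identifying $B_1\cong M_{\#G}(A)$ through the quasi-basis $\{(u_g,u_g^*)\}$, exactly as in Lemmas~\ref{lem:basic construction} and~\ref{lemma:matrix}. The only cosmetic difference is that you compute the quasi-basis directly for $\hat{E}$ rather than for the canonical expectation $F\colon A\rtimes_\alpha G\to A$ and then transporting via the isomorphism $\pi$, which is the same calculation.
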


\vskip 3mm 

Before giving the proof we need the following two {lemmas}, which must be  well-known.

\begin{lemma}\label{lem:basic construction}
Under the same conditions in Theorem~\ref{thm:crossed product LP} 
consider the following two basic constructions:

\begin{align*}
A^\alpha &\subset A \subset C^*\langle A, e_P\rangle \subset C^*\langle B, e_A\rangle \ (B = C^*\langle A, e_P\rangle)\\
{(A^\alpha)} &\subset A \subset A \rtimes_\alpha G \subset 
C^*\langle A \rtimes_\alpha G, e_F\rangle, 
\end{align*}
where $F \colon A \rtimes_\alpha G \rightarrow A$ is a canonical conditional expectation.
Then there is an isomorphism $\pi \colon C^*\langle A, e_P\rangle \rightarrow A \rtimes_\alpha G$ 
and $\tilde{\pi} \colon C^*\langle B, e_A\rangle \rightarrow 
C^*\langle A \rtimes_\alpha G, e_F\rangle$
such that
\begin{enumerate}
\item $\pi(a) = a \ \forall a \in A$,
\item $\pi(e_p) = q$, where $q = \dfrac{1}{|G|}\sum_{g\in G}u_g$,
\item $A \rtimes_\alpha G = C^*\langle A, q\rangle$,
\item $\tilde{\pi}(b) = \pi(b) \ \forall b \in B$,
\item $\tilde{\pi}(e_A) = e_F$.
\end{enumerate}
Moreover, we have 

(6) $F \circ \pi = \hat{E}$ and $\pi \circ \hat{\hat{E}} = \hat{F} \circ \tilde{\pi}$.
\end{lemma}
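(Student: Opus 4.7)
The plan is to apply Watatani's universality criterion for the basic construction (Remark~\ref{rmk:b-const}(5)) to each of the two towers in turn, verifying first the identity $F \circ \pi = \hat{E}$ so that it can be fed into the second application.

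For (1), (2), and (3), I would take the candidate projection $q = \frac{1}{|G|}\sum_{g \in G} u_g \in A \rtimes_\alpha G$. Direct manipulation using $u_g u_h = u_{gh}$ and $u_g^* = u_{g^{-1}}$ shows $q = q^* = q^2$, and the defining relation $qaq = E(a)q$ for $a \in A$ follows from $u_h a = \alpha_h(a) u_h$ together with the definition of $E$. Injectivity of $P \ni x \mapsto xq$ comes from $(xq)^*(xq) = E(x^*x)q$: applying $F$ (which sends $q$ to $\tfrac{1}{|G|}$) gives $E(x^*x) = 0$, hence $x = 0$ since $E$ is faithful by Remark~\ref{rmk:b-const}(4). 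Remark~\ref{rmk:b-const}(5), applied to the inclusion $A \subset A \rtimes_\alpha G$ on a faithful Hilbert-space representation, then produces a $*$-isomorphism $\pi$ from $C^*\langle A, e_P\rangle$ onto $\overline{\operatorname{span}(AqA)}$ with $\pi(a) = a$ and $\pi(e_P) = q$. The equality $\overline{\operatorname{span}(AqA)} = A \rtimes_\alpha G$---the classical basic-construction realization of the crossed product by an outer finite group action (outerness being guaranteed by the Rokhlin hypothesis)---can be checked by exhibiting a quasi-basis for $E$ that writes each $u_h$ as an element of $C^*\langle A, q\rangle$, or by comparing Watatani indices: both $\hat{E}$ and $F$ have index $|G|$, which forces the injection $\pi$ to surject onto $A \rtimes_\alpha G$.

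Next, I would verify the first half of (6) before moving on, because it is needed to set up the second Watatani criterion. Both $F \circ \pi$ and $\hat{E}$ are conditional expectations $C^*\langle A, e_P\rangle \to A$, and they agree on the dense spanning set $\{ae_Pb : a, b \in A\}$: Remark~\ref{rmk:b-const}(4) gives $\hat{E}(ae_Pb) = \tfrac{1}{|G|}ab$, while $F(\pi(ae_Pb)) = F(aqb) = \tfrac{1}{|G|}ab$ since $F$ extracts the $u_e$-component of $aqb = \tfrac{1}{|G|}\sum_g a \alpha_g(b) u_g$.

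For (4) and (5), the plan is to repeat the argument one floor up, now for the inclusion $A \subset B$. The candidate Jones projection is $e_F$, sitting inside $C^*\langle A \rtimes_\alpha G, e_F\rangle$. The identity $F \circ \pi = \hat{E}$ just established gives $e_F \pi(b) e_F = \hat{E}(b) e_F$ for $b \in B$, and injectivity of $A \ni a \mapsto a e_F$ is automatic from the finite-index faithfulness of $F$. Remark~\ref{rmk:b-const}(5) then produces an isomorphism $\tilde{\pi} \colon C^*\langle B, e_A\rangle \to C^*\langle A \rtimes_\alpha G, e_F\rangle$ with $\tilde{\pi}(b) = \pi(b)$ for $b \in B$ and $\tilde{\pi}(e_A) = e_F$, which are precisely (4) and (5). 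The remaining identity $\pi \circ \hat{\hat{E}} = \hat{F} \circ \tilde{\pi}$ in (6) is then checked by evaluating both sides on the spanning elements $y e_A z$ of $C^*\langle B, e_A\rangle$, using the Watatani formula for the double dual together with $\tilde{\pi}(e_A) = e_F$ and the analogous formula for $\hat{F}$.

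The main obstacle I anticipate is the equality $A \rtimes_\alpha G = C^*\langle A, q\rangle$ inside (3). Although this is folklore, writing out a clean proof requires either an explicit quasi-basis for $E$ or an index-comparison argument exploiting the outerness of $\alpha$ furnished by the Rokhlin property; everything else is mechanical bookkeeping with the Watatani machinery summarized in Remark~\ref{rmk:b-const}.
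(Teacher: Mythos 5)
Your proposal is correct and follows essentially the same route as the paper: the same candidate projection $q=\frac{1}{|G|}\sum_{g}u_g$, the same verification of $qaq=E(a)q$, the universality criterion of Remark~\ref{rmk:b-const}(5) applied twice, and density of $\mathrm{span}(AqA)$ in $A\rtimes_\alpha G$ via outerness/saturation of $\alpha$ (the paper cites \cite[Proposition~4.9]{JP} for exactly this point, which is the cleaner of your two suggested routes for (3)). The only organizational difference is that you establish $F\circ\pi=\hat{E}$ first and deduce $e_F\pi(b)e_F=\pi(\hat{E}(b))e_F$ from it, whereas the paper checks that identity directly on spanning elements and leaves (6) to a final computation; both are sound.
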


\vskip 3mm

\begin{proof}
At first we prove the condition $(3)$.
Since $\alpha$ is outer, $\alpha$ is saturated by \cite[Proposition~4.9]{JP}, 
that is, 
\begin{align*}
A \rtimes_\alpha G& = \overline{\mathrm{linear\  span \ of}\ 
\{\sum_{g\in G}(\alpha_g(x)u_g)^*(\sum_{g\in G}\alpha_g(y)u_g)\mid x, y \in A\}}\\
&= \overline{\mathrm{linear\  span \ of}\ \{\dfrac{1}{|G|}\sum_{g\in G}x^*\alpha_g(y)u_g\mid x, y \in A\}}.
\end{align*}

On the contrary, for any $x, y \in A$
\begin{align*}
xqy &= x\dfrac{1}{|G|}\sum_{g\in G} u_gy\\
&= \dfrac{1}{|G|}x\sum_{g\in G}u_gyu_g^*u_g\\
&= \dfrac{1}{|G|}\sum_{g\in G}x\alpha_g(y)u_g,
\end{align*}
hence $A \rtimes_\alpha G = {C^*\langle A, q\rangle}$.

Since for any $a \in A$
\begin{align*}
qaq &= \dfrac{1}{|G|^2}\sum_{g\in G}u_g a\sum_{h\in G}u_h\\
&= \dfrac{1}{|G|^2}\sum_{g, h \in G}u_gau_g^*u_gh\\
&= \dfrac{1}{|G|^2}\sum_{g, h\in G}\alpha_g(a)u_{gh}\\
&= \dfrac{1}{|G|}\sum_{g\in G}\alpha_g\dfrac{1}{|G|}\sum_{h\in G}u_{gh}\\
&= E(x)q,
\end{align*}
by Remark~\ref{rmk:b-const}(5)
there is an isomorphism $\pi \colon C^*\langle A, e_P\rangle \rightarrow C^*\langle A, q\rangle = A\rtimes_\alpha G$
such that $\pi(a) = a$ for any $a \in A$ and $\pi(e_P) = q$. Hence the conditions $(1)$ and $(2)$ are proved.

By the similar steps we will show the conditions $(4)$ and $(5)$. 
Since for any $x, y, a, b \in A$ 
\begin{align*}
(e_F\pi(xe_Py)e_F)(aqb) &= (e_Fxqy)F(\dfrac{1}{|G|}\sum_{g\in G}a\alpha_g(b)u_g)\\
&= \dfrac{1}{|G|}(e_Fxqyab)\\
&= \dfrac{1}{|G|^2}xyab
\end{align*}
On the contrary, 
\begin{align*}
\pi(\hat{E}(xe_Py))e_F(aqb) &= \pi(\dfrac{1}{|G|}xy)\dfrac{1}{|G|}ab\\
&= \dfrac{1}{|G|^2}xyab. \\
\end{align*}
Hence, we have $e_F\pi(xe_py)e_F = \pi(\hat{E}(xe_Py))$. 
By Remark~\ref{rmk:b-const}(5) there is an isomorphism 
$\tilde{\pi}\colon C^*\langle B, e_A\rangle \rightarrow 
C^*\langle A \rtimes_\alpha G, e_F\rangle$ 
such that $\tilde{\pi}(b) = \pi(b)$ for any $b \in B$ and $\tilde{\pi}(e_A) = e_F$.

The condition $(6)$ comes from the direct computation.
\end{proof}

\vskip 3mm

\begin{lemma}\label{lemma:matrix}
Under the same conditions in Lemma~\ref{lem:basic construction}
$C^*\langle A \rtimes_\alpha G, e_F\rangle$ is isomorphic to $M_{|G|}(A)$.
\end{lemma}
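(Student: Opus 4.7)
The plan is to apply Watatani's universal characterization of the basic construction (Remark~\ref{rmk:b-const}(5)) by exhibiting $M_{|G|}(A)$ as a concrete $C^*$-algebra generated by a faithful copy of $A\rtimes_\alpha G$ together with a single projection that implements the expectation $F$.

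First, I would realize the regular covariant representation of $A\rtimes_\alpha G$ inside $M_{|G|}(A)$: fix a faithful representation $A\subset B(\mathcal{H})$ and, on $\mathcal{H}\otimes \ell^{2}(G)$, set $\pi(a)(\xi\otimes\delta_g)=\alpha_{g^{-1}}(a)\xi\otimes\delta_g$ and $\lambda(h)(\xi\otimes\delta_g)=\xi\otimes\delta_{hg}$. A routine check gives the covariance $\lambda(h)\pi(a)\lambda(h)^{*}=\pi(\alpha_{h}(a))$, so $(\pi,\lambda)$ assembles into a faithful $*$-homomorphism $\rho\colon A\rtimes_\alpha G\to M_{|G|}(B(\mathcal{H}))$. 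In matrix units this reads $\rho(a)=\sum_{g}\alpha_{g^{-1}}(a)\,e_{g,g}$ and $\rho(u_h)=\sum_{g}e_{hg,g}$, so $\rho(A\rtimes_\alpha G)\subset M_{|G|}(A)$.

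Next I would take $e:=e_{1_G,1_G}$, where $1_G$ is the identity of $G$. The expectation $F(\sum_{g}a_g u_g)=a_{1_G}$ has finite Watatani index $|G|$, witnessed by the quasi-basis $\{(u_g,u_g^{*})\}_{g\in G}$, so Remark~\ref{rmk:b-const}(5) applies once I verify the two compression hypotheses. Direct bookkeeping in matrix units yields
$$
e\,\rho(au_h)\,e \;=\; \delta_{h,1_G}\,a\,e_{1_G,1_G} \;=\; \rho\bigl(F(au_h)\bigr)\,e,
$$
which extends by linearity and continuity to $exe=\rho(F(x))e$ for every $x\in A\rtimes_\alpha G$; and $A\ni a\mapsto ae=ae_{1_G,1_G}$ is clearly injective. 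Watatani's criterion therefore supplies an isomorphism
$$
\overline{\mathrm{span}}\bigl\{\rho(A\rtimes_\alpha G)\,e\,\rho(A\rtimes_\alpha G)\bigr\}\;\xrightarrow{\;\cong\;}\;C^{*}\langle A\rtimes_\alpha G,\,e_F\rangle
$$
sending $e\mapsto e_F$ and $\rho(x)\mapsto x$.

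Finally I would identify the left-hand side with $M_{|G|}(A)$: the computation $\rho(u_g)\,e\,\rho(u_h)^{*}=e_{g,h}$ together with $\rho(a)\,e_{g,h}=\alpha_{g^{-1}}(a)\,e_{g,h}$ shows that every element of the form $b\,e_{g,h}$ with $b\in A$ and $g,h\in G$ lies in the closed span, hence this span is all of $M_{|G|}(A)$. This produces the desired isomorphism $C^{*}\langle A\rtimes_\alpha G,e_F\rangle\cong M_{|G|}(A)$. The only real care is in tracking matrix indices while checking the compression identity and the quasi-basis relation; beyond that, the argument is purely formal and does not require the Rokhlin property, only the existence of the regular covariant representation.
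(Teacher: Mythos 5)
Your proof is correct, but it takes a genuinely different route from the paper. The paper's argument is essentially two lines: since $\{(u_g^{*},u_g)\}_{g\in G}$ is a quasi-basis for $F$, Watatani's structural result \cite[Lemma~3.3.4]{Watatani:index} identifies $C^{*}\langle A\rtimes_\alpha G, e_F\rangle$ with the corner $rM_{|G|}(A)r$, where $r=[F(u_g^{*}u_h)]_{g,h\in G}$; as $F(u_g^{*}u_h)=\delta_{g,h}1$, the projection $r$ is the identity matrix and the corner is all of $M_{|G|}(A)$. You instead build the isomorphism by hand: you realize $A\rtimes_\alpha G$ faithfully inside $M_{|G|}(A)$ via the regular covariant representation, take the single matrix unit $e_{1_G,1_G}$ as the candidate Jones projection, verify the compression identity $e\rho(x)e=\rho(F(x))e$ and the injectivity of $a\mapsto ae$, and then invoke the universality criterion (Remark~\ref{rmk:b-const}(5), i.e.\ \cite[Proposition~2.2.11]{Watatani:index}) together with the computation $\rho(u_g)e\rho(u_h)^{*}=e_{g,h}$ to see that the closed span of $\rho(A\rtimes_\alpha G)\,e\,\rho(A\rtimes_\alpha G)$ is all of $M_{|G|}(A)$. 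Your matrix-unit computations check out (including the covariance relation and the identity $\rho(a)e_{g,h}=\alpha_{g^{-1}}(a)e_{g,h}$), and faithfulness of $\rho$ is unproblematic since $G$ is finite, so the full and reduced crossed products agree. What the paper's approach buys is brevity, since the general lemma already packages the corner description; what yours buys is an explicit, self-contained isomorphism that makes visible exactly where each matrix unit of $M_{|G|}(A)$ comes from, at the cost of some index bookkeeping. You correctly note that the Rokhlin property plays no role here; the same is true of the paper's proof.
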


\vskip 3mm

\begin{proof}
Note that $\{(u_g^*, u_g)\}_{g\in G}$ is a quasi-basis for $F$. 
By \cite[Lemma~3.3.4]{Watatani:index} there is an isomorphism from 
$C^*\langle A \rtimes_\alpha G, e_F\rangle$
to $rM_{|G|}(A)r$, where $r = [E(u_g^*u_h)]_{g, h \in G} = I_{|G|}$. 
Hence $C^*\langle A \rtimes_\alpha G, e_F\rangle$ is isomorphic to $M_{|G|}(A)$.
\end{proof}

\vskip 3mm

Proof of Theorem~\ref{thm:crossed product LP}:

Let $\{e_g\}_{g\in G}$ be the Rokhlin projection of $E$.
From Proposition~\ref{prp:group} $E\colon A \rightarrow A^G$ is of index finite and has a projection $e \in A' \cap A^\infty$ 
such that $E^\infty(e) = \dfrac{1}{|G|}1$. Note that $\mathrm{Index} E = |G|$ and $e = e_1$.
Consider the basic construction 
$$
A^G \subset A \subset C^*\langle A, e_P\rangle \subset C^*\langle B, e_A\rangle \ \ (B = C^*\langle A, e_P\rangle)
$$

Since $A$ is simple, the map 
$A \ni x \mapsto xe$ is injective, hence we know that $E$ has the Rokhlin property. 
Therefore $A^G$ has the LP property by Theorem~\ref{thm:index LP}.

Since $C^*\langle A \rtimes_\alpha G, e_F\rangle$ is isomorphic to $M_{|G|}(A)$ by Lemma~\ref{lemma:matrix} 
and $A$ has the LP property,
$C^*\langle A, e_F\rangle$ has the LP property.
Hence, $C^*\langle B, e_A\rangle$ has the LP property, 
because that  $C^*\langle A \rtimes_\alpha G, e_F\rangle$ is isomorphic to $C^*\langle B, e_A\rangle$ 
from Lemma~\ref{lem:basic construction}. 
From Proposition~\ref{prp:Rokhlin}
$\hat{\hat{E}}\colon C^*\langle B, e_A\rangle \rightarrow C^*\langle A, e_P\rangle$ 
has the Rokhlin property, hence we conclude that $C^*\langle A, e_P\rangle$ has the LP property by 
Theorem~\ref{thm:index LP}. Since $C^*\langle A, e_P\rangle$ is isomorphhic to $A \rtimes_\alpha G$ by 
Lemma~\ref{lem:basic construction}, we conclude that $A \rtimes_\alpha G$ has the LP property.

\hfill$\qed$

\vskip 3mm

\begin{rem}\label{NotLP}
\begin{enumerate}
\item
When an action $\alpha$ of a finite group $G$ does not have the Rokhlin property, 
we have an example of simple unital $C^*$-algebra with the LP property such that 
the fixed point algebra $A^G$ does not have the LP property by Example~\ref{exa:fixed point}.
Note that the action $\alpha$ does not have the Rokhlin property.
\item
When an action of a finite group $G$ on a unital $C^*$-algebra $A$ has the Rokhlin property, the crossed product 
can be locally approximated by the class of matrix algebras over corners of $A$ $($\cite[Theorem~3.2]{OP}$)$. 
Many kind of properties are preserved by this method such that AF algebras \cite{NP}, AI algebras, 
AT algebras, simple AH algebras with slow dimension growth and real rank zero \cite{OP}, 
D-absorbing separable unital $C^*$-algebras for a strongly self-absorbing $C^*$-algebras \cite{HW}, 
simple unital separable strongly self-absorbing $C^*$-algebras \cite{OT}, unital Kirchberg $C^*$-algebras \cite{OP} etc.
Like the ideal property \cite{PP}, however, since the LP property is not preserved by passing to corners by 
Lemma~\ref{lem:LP}, we can not apply this method to determine the LP property of the crossed products.
\end{enumerate}
\end{rem}
 
 \vskip 3mm

We could also  have many examples which shows that 
the LP property is preserved under the formulation of crossed products 
from the following observation.

\vskip 3mm

Let $A$ be an infinite dimensional simple $C^*$-algebra and let $\alpha$ be an action 
from a finite group $G$ on $\mathrm{Aut}(A)$. 
Recall that $\alpha$  has the tracial Rokhlin property if for every finite set $F \subset A$, 
every $\varepsilon > 0$,
and every positive element $x \in A$ with $\|x\| = 1$, there are mutually orthogonal
projections $e_g \in A$ for $g \in G$ such that:
\begin{enumerate}
\item[$(1)$] 
$\|\alpha_g(e_h) - e_{gh}\| < \varepsilon$ for all $g, h \in G$ and all $a \in F$,
\item[$(2)$]
$\|e_ga - ae_g\| < \varepsilon$ for all $g \in G$ and all $a \in F$.
\item[$(3)$] With $e = \sum_{g\in G}e_g$, 
the projection $1 - e$ is Murray-von Neumann equivalent
to a projection in the hereditary subalgebra of $A$ generated by $x$.
\item[$(4)$] With $e$ as in (3), we have $\|exe\| > 1 - \varepsilon$.
\end{enumerate}

It is obvious that the tracial Rokhlin property is weaker than the Rokhlin prpoperty.

\vskip 3mm

\begin{prop}
Let $\alpha$ be an action of a finite group $G$ on a simple unital $C^*$-algebra $A$ with a unique tracial state. 
Suppose that $\alpha$ has the tracial Rokhlin property. 
If $A$ has the LP property, then the crossed product $A\rtimes_\alpha G$ has the LP property.
\end{prop}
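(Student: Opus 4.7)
The plan is to reduce the proposition to the criterion used in the proof of Lemma~\ref{lem:LP}, namely \cite[Corollary~5]{GP}: a simple unital $C^*$-algebra with a unique tracial state that contains a nontrivial projection has the LP property. Writing $B:=A\rtimes_\alpha G$, it therefore suffices to check three items: (a) $B$ is simple and unital, (b) $B$ has a unique tracial state, and (c) $B$ contains a nontrivial projection.

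For (a) and (c), I would first show each $\alpha_g$, $g\neq 1$, is outer. If some $\alpha_g$ were implemented by a unitary $u\in A$, then choosing tracial Rokhlin projections $\{e_h\}_{h\in G}$ for $F=\{u\}$ with parameter $\e<1/2$ gives $\|[u,e_1]\|<\e$, whence $\|\alpha_g(e_1)-e_1\|<\e$; combined with $\|\alpha_g(e_1)-e_g\|<\e$ this forces $\|e_1-e_g\|<2\e$. But $e_1$ and $e_g$ are nonzero (each has trace $\approx 1/|G|$, using $\alpha$-invariance of $\tau$) and mutually orthogonal, so $\|e_1-e_g\|=1$, a contradiction. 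Kishimoto's theorem for outer actions of discrete groups on simple $C^*$-algebras then delivers simplicity of $B$. For (c), the tracial Rokhlin property requires $A$ to be infinite-dimensional, so $A\neq\CC$; the LP property of $A$ then supplies a nontrivial projection in $A\subset B$.

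The main step is (b). The unique trace $\tau$ on $A$ is automatically $\alpha$-invariant, so $\tilde\tau:=\tau\circ F$ is a tracial state on $B$, where $F\colon B\to A$ is the canonical conditional expectation. For any other tracial state $\sigma$ on $B$, the restriction $\sigma|_A$ is an $\alpha$-invariant trace, hence equals $\tau$, and it remains to prove $\sigma(au_g)=0$ for every $g\neq 1$ and $a\in A$. I would argue this via a Kishimoto-style averaging using the tracial Rokhlin data: given $\e>0$, pick tracial Rokhlin projections $\{e_h\}_h$ almost commuting with $a$ and with $\tau(1-\sum_h e_h)<\e$; the intertwining identity $e_hu_g\approx u_g e_{g^{-1}h}$ together with the trace property of $\sigma$ yields $\sigma(au_g e_h)\approx\sigma(au_g e_{g^{-k}h})$ for every $k$; iterating around the cyclic subgroup $\langle g\rangle\leq G$ and bounding the leftover $(1-\sum_h e_h)$-piece by a tracial Cauchy--Schwarz inequality then forces $\sigma(au_g)=0$. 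The hard part will be making this averaging scheme precise---tracking the accumulated error across the three distinct approximations in the definition of tracial Rokhlin (approximate commutation, approximate intertwining, and tracial smallness of $1-\sum_h e_h$). Once (b) is established, \cite[Corollary~5]{GP} delivers the LP property of $B=A\rtimes_\alpha G$.
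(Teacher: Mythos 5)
Your overall route is the same as the paper's: reduce to Pedersen's criterion for simple unital $C^*$-algebras with at most one tracial state by verifying that $A\rtimes_\alpha G$ is simple, has a unique trace, and contains a nontrivial projection. The difference is that the paper obtains the two nontrivial ingredients by citation --- pointwise outerness of $\alpha$ from \cite[Lemma~1.5]{NP} (hence simplicity by Kishimoto), and uniqueness of the trace on the crossed product from \cite[Proposition~5.7]{ELPW}, which says restriction is a bijection from traces on $A\rtimes_\alpha G$ onto $\alpha$-invariant traces on $A$ --- whereas you re-derive both from the definition of the tracial Rokhlin property. Your outerness argument is correct and is essentially the standard proof of the cited lemma of Phillips; note also that the hypothesis that $A$ has the LP property is not really needed for the existence of a nontrivial projection in $B$, since $q=\frac{1}{|G|}\sum_g u_g$ is already one.

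The one genuine soft spot is in your step (b). The averaging scheme as you describe it --- iterating $\sigma(au_ge_h)\approx\sigma(au_ge_{g^{-k}h})$ around the cyclic group $\langle g\rangle$ --- only shows that the numbers $\sigma(au_ge_{h'})$ for $h'$ in a $\langle g\rangle$-orbit nearly coincide; since summing them over $h$ recovers $\sigma(au_ge)$, no amount of iteration forces them to vanish. What actually kills these terms is the mutual orthogonality of the Rokhlin projections: for $g\neq 1$,
$$
\sigma(au_ge_h)=\sigma(e_hau_ge_h)\approx\sigma(ae_hu_ge_h)\approx\sigma\bigl(au_ge_{g^{-1}h}e_h\bigr)=0,
$$
because $g^{-1}h\neq h$. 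Summing over $h\in G$ and estimating the remaining term $\sigma(au_g(1-e))$ by Cauchy--Schwarz together with the tracial smallness of $1-e$ (choose $x$ with small dimension function, so that the projection in condition (3) has small trace) then gives $\sigma(au_g)=0$, and the rest of your argument goes through. Alternatively, you can simply invoke \cite[Proposition~5.7]{ELPW} as the paper does and avoid reproving this.
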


\begin{proof}
From \cite[Proposition~5.7]{ELPW} the restriction map from tracial states on the crossed product $A \rtimes_\alpha G$ 
to $\alpha$-invariant tracial states on $A$ is isomorphism. Hence, $A \rtimes_\alpha G$ has a unique tracial state.

Since $\alpha$ is a pointwise outer (i.e., for any $g \in G\backslash\{0\}$ $\alpha_g$ is outer) by \cite[Lemma~1.5]{NP},
$A \rtimes_\alpha G$ is simple.

Therefore, by \cite[Corollary~4]{GP} $A \rtimes_\alpha G$ has the LP property.
\end{proof}

\vskip 3mm

\begin{rem}
There are many examples of actions $\alpha$ of finite groups on simple unital $C^*$-algebras with real rank zero  and 
a unique tracial state such that  $\alpha$ has the tracial Rokhlin property. See \cite{NP}, \cite{ELPW}.
\end{rem}


\bibliographystyle{amsplain}

\end{document}